\titleformat{\subsection}[runin]
  {\normalfont\normalsize\bf}{\thesubsection}{1em}{}
	\def\MR#1{}
\newcommand{\bC}{\mathbb{C}}    
\newcommand{\bH}{\mathbb{H}}    
\newcommand{\bP}{\mathbb{P}}    
\newcommand{\bQ}{\mathbb{Q}}    
\newcommand{\bR}{\mathbb{R}}    
\newcommand{\bZ}{\mathbb{Z}}    
\newcommand{\cD}{\mathcal{D}}
\newcommand{\cO}{\mathcal{O}}   
\newcommand{\Coh}{\mathrm{Coh}} 
\newcommand{\Db}{\mathrm{D^b}}  
\newcommand{\Geo}{\mathrm{Geo}}
\newcommand{\GL}{\mathrm{GL}}           
\newcommand{\hyp}{\mathrm{hyp}}
\newcommand{\mass}{\mathrm{mass}}
\newcommand{\phase}{\mathrm{phase}}
\newcommand{\PSL}{\mathrm{PSL}}         
\newcommand{\rank}{\mathrm{rank}}
\newcommand{\Stab}{\mathrm{Stab}} 
\newcommand{\Hom}{\operatorname{Hom}}   
\newtheorem*{thm*}{Theorem}
\newtheorem*{prop*}{Proposition}
\newtheorem*{cor*}{Corollary}
\newtheorem*{ques*}{Question}
\newtheorem{thm}{Theorem}[section]
\newtheorem{prop}[thm]{Proposition}
\newtheorem{lemma}[thm]{Lemma}
\numberwithin{equation}{section}
\theoremstyle{definition}
\newtheorem{defn}[thm]{Definition}
\newtheorem{eg}[thm]{Example}
\newtheorem{rmk}[thm]{Remark}
\newtheorem{notn}[thm]{Notation}
\begin{document}
\title{A space of stability conditions that \\ is not a length space}
\author{Yu-Wei Fan}
\date{}
\maketitle

\begin{abstract}
We prove that the space of Bridgeland stability conditions, when equipped with the canonical metric, is not a length space in general. This resolves a question posed by Kikuta in the negative. Furthermore, we introduce two modified metrics on the stability spaces, which may exhibit better metric properties.
\end{abstract}

\section{Introduction}
\label{sec:Intro}

Stability conditions on triangulated categories were first introduced by Bridgeland in his seminal work \cite{BriStab}, motivated by ideas from string theory. Since then, the theory of stability conditions has found profound applications across diverse areas of mathematics, including algebraic geometry, mirror symmetry, symplectic geometry, and representation theory.

The space of stability conditions $\Stab(\cD)$ admits a canonical metric $d$. A fundamental result proved in \cite{BriStab} states that, with the topology on $\Stab(\cD)$ induced by the metric $d$, the natural map 
$$
\Stab(\cD) \rightarrow \Hom(\Lambda, \mathbb{C}); \qquad \sigma = (Z, P) \mapsto Z 
$$ 
is a local homeomorphism. In particular, this endows $\Stab(\cD)$ with the structure of a complex manifold.

The metric properties of $\Stab(\cD)$ have been extensively investigated from various perspectives:
\begin{itemize}
    \item \emph{Completeness}: The metric space $(\Stab(\cD), d)$ was shown to be complete by Woolf \cite{Woolf}*{Theorem~3.6}.
    \item \emph{Non-positive curvature}: Connections to CAT(0) geometry have been conjectured and explored in \cite{BayerBridgeland}*{Remark~1.5}, \cite{Allcock}*{Section~7}, \cite{Smith}*{Section~3.2}, and \cite{Kikuta}.
    \item \emph{Weil–-Petersson-type metrics}: A Weil–-Petersson metric on stability spaces was proposed in \cite{FKY}.
\end{itemize}
The metric properties are also closely related to recent developments on (partial) compactifications of stability spaces, as seen in works such as \cites{BDL,BPPW,Bolognese,DHL,BMS,KKO}.

It is proven in \cite{Kikuta} that $(\Stab(\cD),d)$ and $(\Stab(\cD)/\bC,\bar{d})$ are, in general, not CAT(0) spaces. A natural follow-up question arises concerning a weaker geometric property, is proposed in the same paper:
\begin{ques*}[\cite{Kikuta}*{Question~5.3}]
Is the metric space $(\Stab(\cD),d)$ a length space?
\end{ques*}

We answer this question negatively by establishing the following result:

\begin{thm}
\label{thm:main}
Let $\Db(\bP^1)$ denote the bounded derived category of coherent sheaves on the projective line. Then neither the space of stability conditions $(\Stab(\Db(\bP^1)),d)$ nor its quotient $(\Stab(\Db(\bP^1))/\bC,\bar{d})$ is a length space.
\end{thm}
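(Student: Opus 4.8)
The plan is to produce two stability conditions at an explicit, computable distance and then to show that no point can serve as an approximate midpoint, so that the infimum of path lengths strictly exceeds the distance. First I would fix the standard description of $\Db(\bP^1)$: every indecomposable object is a shift of a line bundle $\cO(n)$ or of a skyscraper $\cO_x$, the numerical Grothendieck group is $\bZ^2$ with $[\cO(n)]=(1,n)$ and $[\cO_x]=(0,1)$, and a stability condition in the geometric region is recorded by the two central charges $z_0=Z(\cO)$ and $z_1=Z(\cO(1))$. Since $[\cO(n)]=(1-n)[\cO]+n[\cO(1)]$ and $[\cO_x]=[\cO(1)]-[\cO]$, all central charges are the affine images $Z(\cO(n))=(1-n)z_0+nz_1$ together with $Z(\cO_x)=z_1-z_0$, i.e.\ the arithmetic progression through $z_0$ with step $z_1-z_0$. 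In this region the stable objects are exactly the $\cO(n)$ and the $\cO_x$ up to shift, so $d$ becomes a supremum over this explicit list of $\max\{|\Delta\phi|,|\Delta\log m|\}$; outside it one must pass to the tilted and algebraic hearts, but the list of stable objects remains completely explicit.

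The tool is the approximate-midpoint criterion: a complete metric space is a length space if and only if every pair of points admits approximate midpoints. Since $(\Stab(\Db(\bP^1)),d)$ is complete by \cite{Woolf}, it suffices to exhibit $\sigma_0,\sigma_1$ and $\delta>0$ with $\max\bigl(d(\sigma_0,\sigma),d(\sigma,\sigma_1)\bigr)\ge \tfrac12 d(\sigma_0,\sigma_1)+\delta$ for every $\sigma$. A first observation constrains the search: within a single chamber the central charges depend affinely on the parameters, and the pseudometric $D(\zeta,\zeta')=\max\{\tfrac1\pi|\arg\zeta'-\arg\zeta|,\ \bigl|\log|\zeta'|-\log|\zeta|\bigr|\}$ attached to each object is flat, being the $\ell^\infty$-metric in the coordinates $(\log|\zeta|,\tfrac1\pi\arg\zeta)$. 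Hence straight-line interpolation of $(z_0,z_1)$ yields genuine midpoints, and the pair must be chosen so that every near-geodesic is forced across a wall, where the object attaining the supremum changes. The obstruction I am aiming for is that two competing objects, one stable on each side of the wall, cannot have their phases and masses simultaneously halved at any single intermediate point.

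Concretely I would begin from the competition between $\cO$ and $\cO(1)$, choosing $\sigma_0,\sigma_1$ so that their roles are exchanged: $m(\cO)$ small and $m(\cO(1))$ large at $\sigma_0$, and vice versa at $\sigma_1$, with the common value of $d(\sigma_0,\sigma_1)$ realized by both mass terms. A midpoint must then make both $m(\cO)$ and $m(\cO(1))$ small, and because $|Z(\cO(n))|=|(1-n)z_0+nz_1|$ this shrinks the central charges of an entire window of line bundles at once; the plan is to play this against the phase data, which the supremum also controls, to preclude a midpoint. The key structural input is the skyscraper relation $[\cO_x]=[\cO(1)]-[\cO]$, which binds the triangle $\{|Z(\cO)|,|Z(\cO(1))|,|Z(\cO_x)|\}$ and dictates exactly how masses may be traded; understanding this triangle is what forces the choice of $\sigma_0,\sigma_1$ into the non-geometric chambers.

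The hard part will be that a candidate midpoint ranges over the whole four-real-dimensional manifold, so the argument must defeat the $\bC$-action: its real part rotates all phases uniformly and its imaginary part rescales all masses, whence any obstruction built from a single mass ratio or a single phase difference is absorbed by the action. The proof therefore has to combine mass and phase constraints from several objects simultaneously and, crucially, to bound the \emph{actual} masses — sums of Harder--Narasimhan factor masses — rather than merely $|Z|$ at midpoints lying in tilted chambers, which is precisely where the global wall-and-chamber structure of $\Stab(\Db(\bP^1))$ enters; I expect this to be the technical heart of the proof. For the quotient $(\Stab(\Db(\bP^1))/\bC,\bar d)$ I would use $\bar d(\bar\sigma_0,\bar\sigma_1)=\inf_{\lambda\in\bC}d(\sigma_0,\lambda\cdot\sigma_1)$ and arrange the obstruction to depend only on $\bC$-invariant data, namely phase differences and mass ratios of objects, so that the absence of approximate midpoints descends to the quotient; alternatively, since a quotient of a length space by isometries is again a length space, a direct proof that $(\Stab(\Db(\bP^1))/\bC,\bar d)$ is not a length space already yields the statement for $(\Stab(\Db(\bP^1)),d)$ as well.
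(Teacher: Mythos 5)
There is a genuine gap. Your strategy — use completeness plus the approximate-midpoint criterion and exhibit a pair with no $\epsilon$-midpoints — is legitimate in principle, but the proposal stops exactly where the work begins: you never produce an explicit pair $(\sigma_0,\sigma_1)$, and you never prove the non-existence of approximate midpoints, which you yourself defer as ``the technical heart.'' Worse, the obstruction you gesture at (exchanging the roles of $\cO$ and $\cO(1)$ so that masses and phases cannot be simultaneously halved) is not backed by any computation and is unlikely to work as stated: each algebraic chamber is geodesic (Kikuta), and your own claim that straight-line interpolation gives genuine midpoints inside a chamber is itself unjustified, since $d$ is a supremum of infinitely many flat pseudometrics and such suprema need not be geodesic in the naive coordinates (whether the closed geometric chamber is even a length space is left as a hypothesis in the paper's Proposition 4.5). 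Note also that the midpoint route forces you to rule out candidates ranging over the entire four-dimensional manifold, including all tilted hearts, which is strictly harder than what is needed.

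The paper's mechanism is different and much more economical: it is a barrier argument, not a midpoint argument. The key structural fact (Proposition \ref{prop:mass-projection}) is that for algebraic stability conditions the masses of all line bundles and skyscrapers are independent of the depth parameter $\beta\geq 1$, so deforming a boundary point $\sigma_2\in\partial\overline{\Stab^\Geo(\Db(\bP^1))}$ into an algebraic chamber to get $\sigma_3$ changes no mass-distance and only slightly perturbs the phase-distance. Choosing a geometric $\sigma_1$ whose distance to the boundary is realized at $\sigma_2$ and is dominated by the mass component, one gets $d(\sigma_1,\sigma_3)=d(\sigma_1,\sigma_2)$ while $\sigma_3$ sits at distance $\geq\epsilon/2>0$ from the boundary; since every path from $\sigma_1$ to $\sigma_3$ must cross the boundary, every such path has length $\geq d(\sigma_1,\sigma_3)+\epsilon/2$. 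No claim about midpoints is needed, and only two one-dimensional distance estimates are required. Your final remark — that it suffices to treat the quotient $(\Stab(\Db(\bP^1))/\bC,\bar d)$ because a quotient of a length space by isometries with closed orbits is again a length space — is correct and would be a genuine (small) simplification over the paper, which handles the two cases in parallel; but it does not repair the missing core of the argument.
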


The proof of this theorem is given in Section~\ref{subsec:proof-of-thm}. Let us outline the main idea of the proof. 
The space $\Stab(\Db(\bP^1))$ is completely described by Okada \cite{Okada}: It consists of a \emph{geometric} chamber $\Stab^\Geo(\Db(\bP^1))$, with $\bZ$-many \emph{algebraic} chambers attached along its boundary. The key steps are:
\begin{enumerate}[label=(\roman*)]
    \item Choose stability conditions $\sigma_1$ and $\sigma_2$, where $\sigma_1$ is \emph{geometric} and lies far from the boundary $\partial\overline{\Stab^\Geo(\Db(\bP^1))}$, while $\sigma_2\in\partial\overline{\Stab^\Geo(\Db(\bP^1))}$ achieves the infimum 
$$
d(\sigma_1,\sigma_2)=\inf\left\{d(\sigma_1,\tilde{\sigma}):\tilde{\sigma}\in\partial\overline{\Stab^\Geo(\Db(\bP^1))}\right\}.
$$
    \item\label{item:dist-preserve} Choose a small deformation $\sigma_3$ of $\sigma_2$ where:
    \begin{itemize}
        \item $\sigma_3$ lies in an algebraic chamber and has positive distance to the boundary 
        $$
        \inf\left\{d(\sigma_3,\tilde{\sigma}):\tilde{\sigma}\in\partial\overline{\Stab^\Geo(\Db(\bP^1))}\right\}=\epsilon>0,
        $$
        \item \emph{distance-preserving}: $d(\sigma_1,\sigma_2)=d(\sigma_1,\sigma_3)$.
    \end{itemize}
    \item Then, the distance between $\sigma_1$ and $\sigma_3$ cannot be realized by any path connecting them. Indeed, any such path must cross the boundary $\partial\overline{\Stab^\Geo(\Db(\mathbb{P}^1))}$, but $\sigma_2$ already minimizes the distance from $\sigma_1$ to the boundary. Consequently, every path joining $\sigma_1$ and $\sigma_3$ has length at least $d(\sigma_1,\sigma_3)+\epsilon$.
\end{enumerate}
This proves that $(\Stab(\Db(\mathbb{P}^1)), d)$ is not a length space.

\begin{rmk}
\label{rmk:algebraic-not-good}
The proof hinges on the ability to deform $\sigma_2$ into the algebraic chamber while preserving its distance to $\sigma_1$ (the distance-preserving property in Step~\ref{item:dist-preserve}).
This phenomenon reflects the distinct behavior of the distance function $d$ in geometric versus algebraic chambers.
These observations suggest that when $\Db(X)$ admits only geometric stability conditions, the resulting metric space may possess more favorable properties.

A concrete example occurs when $X$ is a curve of genus $g(X) \geq 1$: In this case, there is an isometry $(\Stab(\Db(X))/\mathbb{C}, \bar{d})\cong(\mathbb{H}, \frac{1}{2}d_{\text{hyp}})$, where $d_{\text{hyp}}$ denotes the hyperbolic metric on the upper half-plane \cite{Woolf}*{Proposition 4.1}.
It would be interesting to investigate whether similar metric properties hold for higher-dimensional varieties whose derived categories admit only geometric stability conditions, for instance, when the Albanese morphism is finite \cite{FLZ}.
\end{rmk}

Motivated by Theorem~\ref{thm:main}, we investigate alternative metrics on stability spaces that may exhibit more favorable geometric properties.
We introduce and analyze two modified metrics, focusing particularly on their behavior for $\bP^1$.
The first metric arises as a natural degeneration of $(\Stab(\mathcal{D}), d)$. Recall that the original distance function (see Notation~\ref{notn:distance-mass-phase}) is defined as the maximum of the mass and phase components:
$$
d(\sigma_1,\sigma_2)=\max\left\{d_\mass(\sigma_1,\sigma_2),d_\phase(\sigma_1,\sigma_2)\right\}.
$$
By considering the limit where the phase component goes to zero, we obtain a metric space:
$$
\Stab_\mass(\cD)\coloneqq\Stab(\cD)/\sim_m, \quad \text{ where } \sigma_1\sim_m\sigma_2 \text{ if } d_\mass(\sigma_1,\sigma_2)=0,
$$
equipping with the metric $d_\mass$.
This type of metric degeneration also appears in \cite{FFHKL}*{Section~2.1} and \cite{BaeLee}*{Remark~10.13}.
Note that the free $\mathbb{C}\cong\mathbb{R}^2$-action on $\Stab(\mathcal{D})$ descends to a free $\mathbb{R}$-action on $\Stab_\mass(\mathcal{D})$, since one of the $\bR$-components acts trivially on mass distances (see Remark~\ref{rmk:R-action-on-Mass}).

In the case of $\Stab(\Db(\mathbb{P}^1))$, we observe that the degeneration (identifying $\sigma_1$ and $\sigma_2$ when $d_\mass(\sigma_1,\sigma_2)=0$) collapses all algebraic chambers onto the boundary of the geometric chamber. This yields an isomorphism
$$
\Stab_\mass(\Db(\mathbb{P}^1))/\mathbb{R}\cong\overline{\Stab^\Geo_\mass(\Db(\bP^1))}/\bR.
$$
As suggested in Remark~\ref{rmk:algebraic-not-good}, the absence of algebraic stability conditions leads us to investigate whether the induced metric $\bar{d}_\mass$ on $\Stab_\mass(\Db(\mathbb{P}^1))/\mathbb{R}$ enjoys better properties. We address this question through the following result.

\begin{thm}
There is an isometry
$$
\left(\Stab_\mass(\Db(\mathbb{P}^1))/\mathbb{R},\bar{d}_\mass\right)\cong\left(\bH\cup(\bR\backslash\bZ),\frac{1}{2}d_\bZ\right),
$$
where $d_\bZ$ is a modified metric on $\bH$ that extends to $\bR\backslash\bZ$ (see Definition~\ref{defn:H-metric-Z}). 
Moreover:
\begin{itemize}
    \item The space is geodesic (and therefore a length space).
    \item The space is not uniquely geodesic (and therefore not CAT(0)).
\end{itemize}
\end{thm}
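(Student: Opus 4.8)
The plan is to make the isomorphism $\Stab_\mass(\Db(\bP^1))/\bR\cong\overline{\Stab^\Geo_\mass(\Db(\bP^1))}/\bR$ recorded above completely explicit on the geometric chamber and then simply read off the induced metric. Following Okada's description \cite{Okada}, I would use the residual $\bC$-action to fix the slice $Z([\mathcal{O}_x])=-1$, so that a geometric stability condition is recorded by the single parameter $z=Z([\mathcal{O}])\in\bH$; the stable objects are then the line bundles $\mathcal{O}(n)$, of mass $|z-n|$, and the skyscrapers $\mathcal{O}_x$, of mass $1$. The key computational step is to evaluate $d_\mass$ between two such conditions $\sigma_z,\sigma_{z'}$. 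I would argue that the extreme values of $E\mapsto\log\!\big(m_{\sigma_{z'}}(E)/m_{\sigma_z}(E)\big)$ are attained on stable objects: writing $f_n=\log(|z'-n|/|z-n|)$ for $n\in\bZ$, with $f_n\to0$ as $n\to\pm\infty$ (the skyscraper value), one has $\sup_E=\sup_n f_n$ and $\inf_E=\inf_n f_n$.

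Next I would pass to the quotient by the residual scaling $\bR$. Since scaling the central charge by $\lambda>0$ multiplies every mass by $\lambda$, one gets $d_\mass(\sigma_z,\lambda\sigma_{z'})=\sup_n|\log\lambda+f_n|$, and the elementary identity $\inf_{c}\sup_n|c+f_n|=\tfrac12(\sup_n f_n-\inf_n f_n)$ yields
$$
\bar d_\mass([z],[z'])=\tfrac12\Big(\sup_n f_n-\inf_n f_n\Big)=\tfrac12\sup_{m,n\in\bZ}\log\frac{|z'-m|\,|z-n|}{|z-m|\,|z'-n|}.
$$
I would identify the right-hand side with $\tfrac12 d_\bZ$ of Definition~\ref{defn:H-metric-Z}: it is exactly the cross-ratio expression for the hyperbolic distance, but with the boundary test points restricted from all of $\bR$ to the integer slopes $\bZ$ (together with $\infty$). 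This restriction is precisely the difference from the genus $\geq1$ case \cite{Woolf}*{Proposition~4.1}, where stable bundles of every rational slope make the test set dense in $\bR$ and recover the full $\tfrac12 d_\mathrm{hyp}$. Finally I would examine the boundary: as $z'\to t\in\bR$ each $f_n$ stays finite when $t\notin\bZ$ but $f_m\to-\infty$ when $t\to m\in\bZ$ (and $\sup_n f_n\to+\infty$ as $z'\to i\infty$), so the algebraic chambers collapse onto the finite-distance locus $\bR\setminus\bZ$, the integer slopes (and $\infty$) recede to infinity, and $\tfrac12 d_\bZ$ extends continuously to $\bH\cup(\bR\setminus\bZ)$.

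For the two metric assertions I would exploit that the formula exhibits the metric as the pullback of a variation seminorm: the map $\Phi(z)=(\log|z-n|)_{n\in\bZ}$ embeds $\bH\cup(\bR\setminus\bZ)$ isometrically, up to the factor $\tfrac12$, into $(\ell^\infty/\mathrm{const},\,\|u\|=\sup u-\inf u)$. To prove the space is \emph{geodesic}, I would check that for fixed endpoints the two indices $m,n$ realizing the supremum and infimum can be held constant along the hyperbolic arc (or its boundary analogue) joining $z$ to $z'$; then $d_\bZ$ is additive along that arc, so the arc is length-minimizing, and the same construction produces geodesics ending at points of $\bR\setminus\bZ$. \emph{Non-uniqueness} then follows from the non-strict convexity of the variation seminorm: since the distance constrains only the single difference $f_m-f_n$ of the two active coordinates, I can deform the connecting arc through the one-parameter family of curves along which $f_m-f_n$ varies monotonically while every other $f_k$ stays between $f_n$ and $f_m$; each such curve has the same length, giving infinitely many distinct geodesics and hence ruling out unique geodesity and CAT(0). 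The hard part will be the reduction in the first paragraph — showing that no non-semistable complex beats the line bundles and skyscrapers in the mass ratio — together with the verification that a single pair of active integers $(m,n)$ governs an entire geodesic; both amount to controlling how the active slopes change as $z$ varies, which is exactly the wall-and-chamber combinatorics of Okada's picture.
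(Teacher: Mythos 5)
Your overall strategy coincides with the paper's: compute $\bar d_\mass$ on the geometric chamber from the masses $|\tau-n|$ of line bundles and $1$ of skyscrapers, use the identity $\inf_c\sup_n|c+f_n|=\tfrac12(\sup_n f_n-\inf_n f_n)$ to pass to the $\bR$-quotient (the paper's Lemma~\ref{lemma:supinf}), recognize the result as $\tfrac12 d_\bZ$, prove geodesicity by applying a hyperbolic isometry $\rho$ sending the two points to the imaginary axis so that the extremal pair of test points in $\overline{\rho(\bZ)}$ is the \emph{same} for every pair of points on the arc (hence additivity, hence a geodesic via Lemma~\ref{lemma:geodesiccriterion}), and break uniqueness by perturbing such an arc without changing the active pair. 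Two remarks. First, the reduction you flag as ``the hard part'' --- that no complex beats line bundles and skyscrapers --- is immediate: every object of $\Db(\bP^1)$ is a finite direct sum of shifts of line bundles and skyscrapers, and $\log\bigl(\sum a_i/\sum b_i\bigr)$ lies between the extremes of the $\log(a_i/b_i)$. Second, your non-uniqueness argument is only a sketch until you exhibit endpoints for which the deformation has room; the paper takes $\rho(z)=10-\tfrac1z$, so that $\rho(\bZ)\subseteq[9,11]\cup\{\infty\}$ with $9$ and $\infty$ realizing the extremes, and bends the vertical geodesic from $9i$ to $11i$ slightly at height $10$ without disturbing which boundary points are active.

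The one genuine gap is the assertion that ``the algebraic chambers collapse onto the finite-distance locus $\bR\setminus\bZ$.'' This does not follow from your limit computation of the $f_n$ as $z'\to t$: that computation only describes the closure of the geometric chamber and says nothing about stability conditions lying strictly inside an algebraic chamber. What is actually needed is that for $\sigma\in X_k$ with $\beta>1$, the Harder--Narasimhan filtrations of all $\cO(n)$ and $\cO_x$ are built from $\cO(k)$ and $\cO(k+1)$ via Okada's exact triangles, so that every mass is a fixed positive combination of $|Z(\cO(k))|$ and $|Z(\cO(k+1))|$ and is therefore independent of $\beta$; this is the content of the paper's Table~\ref{table:algebraic} and Proposition~\ref{prop:mass-projection}, and it is what identifies each algebraic chamber with its attaching wall under $\sim_m$. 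Without it, you have not shown that $\Stab_\mass(\Db(\bP^1))/\bR$ is the set $\bH\cup(\bR\setminus\bZ)$ rather than something strictly larger, so the isometry statement itself is incomplete.
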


The second modified metric we consider is the \emph{length metric} construction: Given any metric space $(X,d)$, there exists an associated length metric $d_\ell$ for which $(X,d_\ell)$ is always a length space (see Definition~\ref{defn:length-metric}). While the topology induced by $d_\ell$ is always at least as fine as that of $d$, these topologies may differ in general.
This leads naturally to the question of whether the canonical metric $d$ and its length metric $d_\ell$ induce the same topology on stability spaces.

\begin{prop}
Assume that $(\overline{\Stab^\Geo(\Db(\bP^1))}/\bC,\bar{d})$ is a length space. Then the topology of $(\Stab(\Db(\bP^1))/\bC,\bar{d})$ coincides with the one induced by its associated length metric $\bar{d}_\ell$.
\end{prop}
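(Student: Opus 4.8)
The plan is to exploit that the discrepancy between $\bar d$ and its length metric $\bar d_\ell$ is a purely \emph{large-scale} phenomenon, so that the two metrics agree locally and hence induce the same topology. Write $X\coloneqq\Stab(\Db(\bP^1))/\bC$ and $G\coloneqq\overline{\Stab^\Geo(\Db(\bP^1))}/\bC$. Since $\bar d\leq\bar d_\ell$ by construction, the $\bar d_\ell$-topology is a priori finer than the $\bar d$-topology, so it suffices to prove the reverse inclusion, i.e.\ that the identity $(X,\bar d)\to(X,\bar d_\ell)$ is continuous. As $X$ is metrizable, this is equivalent to the sequential assertion: whenever $\bar d(\sigma_k,\sigma)\to 0$, one has $\bar d_\ell(\sigma_k,\sigma)\to 0$.

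The engine of the argument is the following elementary observation. If $\sigma,\sigma'$ both lie in a subset $C\subseteq X$ that is a \emph{length space} for the restricted metric $\bar d|_C$, then since any path inside $C$ is also a path in $X$ we have $\bar d_\ell(\sigma,\sigma')\leq(\bar d|_C)_\ell(\sigma,\sigma')=\bar d(\sigma,\sigma')$; combined with the universal bound $\bar d\leq\bar d_\ell$, this forces $\bar d_\ell(\sigma,\sigma')=\bar d(\sigma,\sigma')$. In other words, on any chamber closure that happens to be a length space the two metrics already coincide.

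I would then invoke Okada's description to write $X=G\cup\bigcup_{n\in\bZ}\overline{A_n}$, where the $\overline{A_n}$ are the closures of the algebraic chambers, this decomposition being locally finite: every point of $X$ has a $\bar d$-neighborhood meeting only finitely many of these pieces. By hypothesis $G$ is a length space, so the observation applies to $G$. The one genuinely new input is that \emph{each algebraic chamber closure $\overline{A_n}$ is itself a length space}; this should follow from the explicit central-charge coordinates on an algebraic chamber, where all objects have fixed stable factors and $\bar d$ is governed by an explicit formula, allowing one to realize $\bar d$ there as a genuine path metric via interpolating central charges. I expect this verification -- and in particular controlling the lengths of interpolating paths up to the chamber boundary -- to be the main technical obstacle; everything else is soft.

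Granting these two facts, the conclusion is formal. Fix $\sigma$ and a sequence $\sigma_k\to\sigma$ in $\bar d$. By local finiteness choose a neighborhood $U\ni\sigma$ meeting only finitely many chamber closures $C^{(1)},\dots,C^{(m)}$ (among $G$ and the $\overline{A_n}$), so that $U\subseteq C^{(1)}\cup\dots\cup C^{(m)}$. For all large $k$ we have $\sigma_k\in U$, hence $\sigma_k\in C^{(i_k)}$ for some $i_k$. Since each $C^{(i)}$ is closed and $\sigma_k\to\sigma$, any $C^{(i)}$ not containing $\sigma$ can contain only finitely many $\sigma_k$; as there are finitely many such indices, for all large $k$ the point $\sigma_k$ lies in some $C^{(i_k)}$ with $\sigma\in C^{(i_k)}$. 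Because each such $C^{(i_k)}$ is a length space (by hypothesis for $G$, by the verification above for the $\overline{A_n}$), the observation yields $\bar d_\ell(\sigma_k,\sigma)=\bar d(\sigma_k,\sigma)\to 0$. This establishes continuity of $(X,\bar d)\to(X,\bar d_\ell)$, hence the equality of topologies. The argument is entirely consistent with Theorem~\ref{thm:main}: the strict inequality $\bar d_\ell>\bar d$ detected there occurs for points lying far apart whose connecting paths are forced across the boundary, a global effect invisible to the local comparison that governs the topology.
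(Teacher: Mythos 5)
Your route is genuinely different from the paper's and, once its flagged gap is closed, it does prove the topological statement. The paper argues globally: it constructs, for each of the three configurations of chambers containing $\bar{\sigma}_1,\bar{\sigma}_2$, an explicit concatenated path (a near-optimal path inside $\overline{\Stab^\Geo(\Db(\bP^1))}/\bC$ supplied by the hypothesis, followed by straight segments in the $(\alpha,\beta)$-coordinates of the algebraic chambers), and bounds its length using the explicit distance formulas; this yields the quantitative estimate $\bar{d}\leq\bar{d}_\ell\leq 2\bar{d}$, of which topological equivalence is an immediate corollary. You instead localize: you observe that $\bar{d}$ and $\bar{d}_\ell$ must agree on any pair of points lying in a common closed piece that is a length space for the restricted metric, and then use closedness plus local finiteness of the chamber decomposition to show that a convergent sequence eventually lands in a piece containing its limit. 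This is softer and avoids all wall-crossing path constructions, but it buys strictly less: it gives equality of topologies without any Lipschitz comparison between $\bar{d}$ and $\bar{d}_\ell$ (the paper's stated form of the proposition in Section~4 includes the bound $\bar{d}_\ell\leq 2\bar{d}$, which your argument does not recover).

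The one ingredient you explicitly leave unverified --- that each closed algebraic chamber $(X_k\backslash\Stab^\Geo(\Db(\bP^1)))/\bC$ is a length space for the restricted metric --- is not actually the hard part: it is exactly the paper's Case (b), and follows from Proposition~\ref{prop:alg-stab-dist}\ref{item:alg-stab-dist-same-chamber}, which shows that on such a chamber $\bar{d}(\bar{\sigma}_{\alpha_1,\beta_1},\bar{\sigma}_{\alpha_2,\beta_2})=\frac{1}{2}\max\left\{|\alpha_1-\alpha_2|,|\beta_1-\beta_2|\right\}$; straight segments in $(\alpha,\beta)$ are then geodesics staying in the region $\beta\geq1$ (this is also the content of the cited result of Kikuta). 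You should likewise record why the decomposition is locally finite and why each piece is closed: both reduce, via $\bar{d}\geq\bar{d}_\mass=\frac{1}{2}d_\bZ$ (Proposition~\ref{prop:mass-projection}) and $\bar{d}\geq\bar{d}_\phase$, to the facts that distinct walls are uniformly separated from any fixed point and that a geometric point with $0<\beta<1$ has $\bar{d}$-distance at least $\frac{1}{2}(1-\beta)$ from the algebraic locus. With those two verifications supplied, your proof is complete.
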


\noindent\textbf{Acknowledgment.}
The author thanks Yi~Huang for stimulating discussions about hyperbolic metrics, and is grateful to Guillaume~Tahar for his invaluable suggestions that improved this article.

\section{Preliminaries}

In this section, we recall the background on stability conditions on triangulated categories in Section~\ref{subsec:Preliminary-Stab}, review basic definitions in metric geometry in Section~\ref{subsec:metric-geometry}, and introduce the modified metric $d_\bZ$ on the upper half-plane in Section~\ref{subsec:metric-on-H}.

\subsection{Stability conditions on triangulated categories.}
\label{subsec:Preliminary-Stab}
We recall the notion of stability conditions on triangulated categories, and the canonical metric on the space of stability conditions, introduced by Bridgeland \cite{BriStab}.

Let $\Lambda$ be a free abelian group of finite rank, $v\colon K_0(\cD)\rightarrow\Lambda$ be a group homomorphism, and $||\cdot||$ be a norm on $\Lambda\otimes_\bZ\bR$.

\begin{defn}[\cite{BriStab}*{Definition~5.1}]
A stability condition $\sigma=(Z,P)$ on a triangulated category $\cD$ (with respect to $\Lambda$ and $v$) consists of a group homomorphism $Z\colon\Lambda\rightarrow\bC$ and a collection $P=\{P(\phi)\}_{\phi\in\bR}$ of full additive subcategories of $\cD$, such that:
\begin{enumerate}[label=(\roman*)]
    \item for $E\in P(\phi)\backslash\{0\}$, $Z(v(E))\in\bR_{>0}\cdot e^{i\pi\phi}$,
    \item for all $\phi\in\bR$, $P(\phi+1)=P(\phi)[1]$,
    \item if $\phi_1>\phi_2$ and $E_i\in P(\phi_i)$, then $\Hom(E_1,E_2)=0$,
    \item for each nonzero object $E\in\cD$, there is a collection of exact triangles
\begin{equation}
\label{eqn:HNfil}
\xymatrix{
0=E_0 \ar[r] & E_1\ar[d]  \ar[r] & E_2 \ar[r] \ar[d]& \cdots \ar[r]& E_{k-1}\ar[r] & E \ar[d]\\
                    & A_1 \ar@{-->}[lu]& A_2 \ar@{-->}[lu] &  & & A_k  \ar@{-->}[lu] 
}    
\end{equation}
    where $A_i\in P(\phi)$ and $\phi_1>\cdots>\phi_k$,
    \item there exists a constant $C>0$ such that $|Z(v(E))|\geq C\cdot||v(E)||$ holds for any $E\in \cup_\phi P(\phi)\backslash\{0\}$.
\end{enumerate}
The set of such stability conditions is denoted by $\Stab_\Lambda(\cD)$.
\end{defn}

\begin{notn}
With the Harder--Narasimhan filtration (\ref{eqn:HNfil}), define
$$
m_\sigma(E)\coloneqq\sum_{i=1}^k|Z(A_i)|; \quad \phi_\sigma^+(E)\coloneqq\phi_1, \quad \phi_\sigma^-(E)\coloneqq\phi_k.
$$
\end{notn}

\begin{defn}[\cite{BriStab}*{Proposition~8.1}]
\label{defn-bridgeland-dist}
Let $\cD$ be a triangulated category. The function
$$
d(\sigma_1,\sigma_2)\coloneqq\sup_{E\in D\backslash\{0\}}\left\{\left|\log\frac{m_{\sigma_1}(E)}{m_{\sigma_2}(E)}\right|,\left|\phi^+_{\sigma_1}(E)-\phi^+_{\sigma_2}(E)\right|,\left|\phi^-_{\sigma_1}(E)-\phi^-_{\sigma_2}(E)\right|\right\}\in[0,+\infty]
$$
defines a generalized metric on $\Stab_\Lambda(\cD)$.
\end{defn}

For our purposes, it will be useful to distinguish between the contributions of the \emph{mass} component $m_\sigma$ and the \emph{phase} components $\phi_\sigma^\pm$ in the distance function:

\begin{notn}
\label{notn:distance-mass-phase}
We denote the \emph{mass-distance} and the \emph{phase-distance} as:
\begin{align*}
    d_\mass(\sigma_1,\sigma_2) & \coloneqq\sup_{E\in D\backslash\{0\}}\left\{\left|\log\frac{m_{\sigma_1}(E)}{m_{\sigma_2}(E)}\right|\right\}, \\
    d_\phase(\sigma_1,\sigma_2) & \coloneqq\sup_{E\in D\backslash\{0\}}\left\{\left|\phi^+_{\sigma_1}(E)-\phi^+_{\sigma_2}(E)\right|,\left|\phi^-_{\sigma_1}(E)-\phi^-_{\sigma_2}(E)\right|\right\}.
\end{align*}
Then $d(\sigma_1,\sigma_2)=\max\left\{d_\mass(\sigma_1,\sigma_2),d_\phase(\sigma_1,\sigma_2)\right\}$.
\end{notn}

This generalized metric induces a topology on the space of stability conditions. The main theorem of \cite{BriStab} asserts that $\Stab_\Lambda(\cD)$ can be endowed with the structure of a complex manifold.

\begin{thm}[\cite{BriStab}*{Theorem~7.1}]
The map
$$
\Stab_\Lambda(\cD)\rightarrow\Hom(\Lambda,\bC); \quad \sigma=(Z,P)\mapsto Z
$$
is a local homeomorphism, where $\Hom(\Lambda,\bC)$ is equipped with the linear topology. Therefore, $\Stab_\Lambda(\cD)$ is naturally a complex manifold of dimension $\rank(\Lambda)$.
\end{thm}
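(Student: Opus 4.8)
The plan is to fix a stability condition $\sigma_0=(Z_0,P_0)$ and show that the central-charge map $\mathcal{Z}\colon\sigma\mapsto Z$ restricts to a homeomorphism from a small $d$-ball about $\sigma_0$ onto an open neighborhood of $Z_0$ in $\Hom(\Lambda,\bC)$. I would equip $\Hom(\Lambda,\bC)$ with the operator norm $\|U\|\coloneqq\sup_{0\neq\lambda\in\Lambda}|U(\lambda)|/\|\lambda\|$, so that $|W(v(E))-Z_0(v(E))|\leq\|W-Z_0\|\cdot\|v(E)\|$ for every object $E$. Continuity of $\mathcal{Z}$ is the easy direction: a bound on $d(\sigma_0,\sigma)$ controls both $m_\sigma(E)/m_{\sigma_0}(E)$ and the phases of $E$, hence $Z(v(E))$, for every object. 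It therefore suffices to establish local injectivity of $\mathcal{Z}$, that its image contains a neighborhood of $Z_0$, and continuity of the local inverse. The manifold statement then follows, since $\Hom(\Lambda,\bC)\cong\bC^{\rank(\Lambda)}$ and on overlaps the transition maps are identities between open subsets of $\Hom(\Lambda,\bC)$, hence holomorphic.

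For local injectivity, I would use that a small value of $d(\sigma_0,\sigma)$ forces the slicings $P_0$ and $P$ to be compatible: by the definition of $d_\phase$, every $\sigma_0$-semistable object $E$ has $\phi^{\pm}_{\sigma}(E)$ within $d(\sigma_0,\sigma)$ of its $\sigma_0$-phase, and by $d_\mass$ its mass changes by at most a factor $e^{d(\sigma_0,\sigma)}$. If in addition $Z=Z_0$, then for such $E$ the phase is pinned exactly by $\arg Z_0(v(E))$, so $E$ remains $\sigma$-semistable of the same phase; running the symmetric argument gives $P(\phi)=P_0(\phi)$ for all $\phi$, whence $\sigma=\sigma_0$.

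The main obstacle is constructing the lift: given $W$ with $\|W-Z_0\|$ small, I must produce a slicing $Q$ with $\mathcal{Z}(W,Q)=W$ lying in the $d$-ball. Here the support property (axiom~(v)) is decisive. It provides a constant $C>0$ with $|Z_0(v(E))|\geq C\|v(E)\|$ for every $\sigma_0$-semistable $E$, so the angular displacement between $W(v(E))$ and $Z_0(v(E))$ is at most $\arcsin(\|W-Z_0\|/C)$ — a bound \emph{uniform} over all semistable objects, since the factor $\|v(E)\|$ cancels. This uniform control prevents the phase order of any two semistable objects from reversing, and lets me define $Q(\psi)$ by collecting the $\sigma_0$-semistable objects whose phase, after rotation by $W$, equals $\psi$. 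I would then verify the slicing axioms (i)--(iii) directly from this description, build the Harder--Narasimhan filtration for $(W,Q)$ by regrouping the $\sigma_0$-filtration of each object according to the new phase order, and confirm the support property persists via $|W(v(E))|\geq(C-\|W-Z_0\|)\|v(E)\|$, which stays positive for small perturbations. Checking that the regrouped filtration genuinely satisfies the Harder--Narasimhan axiom (iv) — that the new phases remain strictly decreasing and no two collide — is the technical heart, and is exactly what the uniform angular estimate is designed to control.

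Finally, continuity of the inverse follows from the same estimates: the construction shows that $\|W-Z_0\|<\delta$ forces $d((W,Q),\sigma_0)<\epsilon(\delta)$ with $\epsilon(\delta)\to0$, so nearby central charges yield nearby stability conditions. Together with injectivity and openness of the image, this makes $\mathcal{Z}$ a local homeomorphism; transporting the standard complex structure on $\Hom(\Lambda,\bC)$ through these charts endows $\Stab_\Lambda(\cD)$ with the structure of a complex manifold of dimension $\rank(\Lambda)$.
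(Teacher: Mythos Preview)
The paper does not prove this theorem; it is quoted as background with a citation to \cite{BriStab}*{Theorem~7.1}, so there is no argument in the paper to compare your proposal against.

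On the substance of your sketch: the overall architecture matches Bridgeland's, and your treatment of continuity and local injectivity is in the right spirit. The genuine gap is in the construction of the lifted slicing $Q$. Defining $Q(\psi)$ to consist only of the $\sigma_0$-semistable objects whose $W$-phase equals $\psi$ is too restrictive. When the perturbation causes two $\sigma_0$-semistables $A,B$ of distinct $\sigma_0$-phases to acquire the \emph{same} $W$-phase $\psi$, every extension of $A$ by $B$ must also lie in $Q(\psi)$, and such extensions are typically not $\sigma_0$-semistable. Your ``regrouping'' of the old HN filtration implicitly manufactures exactly these extensions as the new HN factors, but they then fail to belong to the $Q(\psi)$ you wrote down, so axiom~(iv) is not satisfied for the pair $(W,Q)$ as defined. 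Conversely, a $\sigma_0$-semistable object can become $(W,Q)$-unstable (this is wall-crossing), so $Q(\psi)$ should not contain every $\sigma_0$-semistable of $W$-phase $\psi$ either. Bridgeland's actual argument does not attempt to describe $Q(\psi)$ directly in terms of $P_0$; he works through the associated hearts, first deforming the central charge within a fixed heart and then tilting the $t$-structure, with the support property controlling both steps. Your uniform angular bound is the correct input, but it feeds into that two-stage construction rather than a direct relabeling of semistables.
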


The space $\Stab_\Lambda(\cD)$ carries natural group actions by the group of autoequivalences and the universal cover $\widetilde{\GL^+(2,\bR)}$.
In what follows, we will focus exclusively on the action by the subgroup $\mathbb{C} \subseteq \widetilde{\GL^+(2,\mathbb{R})}$.
For $(x,y)\in\bR^2\cong\bC$, the action is given by:
$$
\sigma=(Z,P)\mapsto\sigma.(x,y)\coloneqq\left(e^{x+i\pi y}\cdot Z, P'\right), \quad \text{ where } P'(\phi)=P(\phi-y).
$$
Note that the $x$-component scales the masses and the $y$-component shifts the phases.
One can easily see that the $\bC$-actions are isometries with respect to the metric $d$.

\begin{defn}
\label{defn:bridgeland-dist-Cquotient}
The quotient metric on $\Stab(\cD)/\bC$ is defined by:
$$
\bar{d}(\bar{\sigma}_1,\bar{\sigma}_2)=\inf_{(x,y)\in\bC}d\left(\sigma_1,\sigma_2.(x,y)\right).
$$
Since every $\bC$-orbit is closed \cite{Woolf}*{Section~2}, this quotient metric is well-defined. The mass and phase distances are defined similarly as in Notation~\ref{notn:distance-mass-phase}:
\begin{align*}
    \bar{d}_\mass(\bar{\sigma}_1,\bar{\sigma}_2) & \coloneqq
    \inf_{x\in\bR}\left\{
    \sup_{E\in D\backslash\{0\}}\left\{\left|\log\frac{m_{\sigma_1}(E)}{m_{\sigma_2}(E)}-x\right|\right\}\right\}, \\
    \bar{d}_\phase(\bar{\sigma}_1,\bar{\sigma}_2) & \coloneqq
    \inf_{y\in\bR}\left\{
    \sup_{E\in D\backslash\{0\}}\left\{\left|\phi^+_{\sigma_1}(E)-\phi^+_{\sigma_2}(E)-y\right|,\left|\phi^-_{\sigma_1}(E)-\phi^-_{\sigma_2}(E)-y\right|\right\}\right\},
\end{align*}
with the relation $\bar{d}(\sigma_1,\sigma_2)=\max\left\{\bar{d}_\mass(\sigma_1,\sigma_2),\bar{d}_\phase(\sigma_1,\sigma_2)\right\}$.
\end{defn}

\begin{rmk}
For $\cD=\Db\Coh(X)$, the bounded derived category of coherent sheaves on $X$, the standard choice for $\Lambda$ is the numerical Grothendieck group $N(\Db\Coh(X))$. When $X$ is a curve, there is an isomorphism $N(\Db\Coh(X))=\bZ\oplus\bZ$, where the quotient map $K_0(\Db\Coh(X))\rightarrow N(\Db\Coh(X))$ sends each class to its rank and degree.
\end{rmk}

\subsection{Metric geometry preliminaries.}
\label{subsec:metric-geometry}
We recall the definitions of length spaces and geodesic spaces in this subsection.

\begin{defn}
Let $(X,d)$ be a metric space.
    \begin{itemize}
        \item The \emph{length} $L(\gamma)\in[0,\infty]$ of a curve $\gamma\colon[0,1]\rightarrow X$ is defined to be
        $$
        L(\gamma)\coloneqq\sup\left(\sum_{i=1}^k d\left(\gamma(t_{i-1}),\gamma(t_i)\right)\right)
        $$
        where the supremum is taken over all sequences $0=t_0\leq t_1\leq\cdots\leq t_k=1$.
        \item A metric space $(X,d)$ is called a \emph{length space} if for any $x,y\in X$,
        $$
        d(x,y)=\inf L(\gamma),
        $$
        where the infimum is taken over all curves $\gamma\colon[0,1]\rightarrow X$ with $\gamma(0)=x$ and $\gamma(1)=y$.
    \end{itemize}
\end{defn}

\begin{defn}
Let $(X,d)$ be a metric space.
\begin{itemize}
    \item A curve $\gamma\colon[0,1]\rightarrow X$ is called a \emph{geodesic} if $d(\gamma(t_1),\gamma(t_2))=|t_1-t_2|\cdot d(\gamma(0),\gamma(1))$ for all $t_1,t_2\in[0,1]$.
    \item The space  $(X,d)$ is a:
        \begin{itemize}
            \item \emph{geodesic space} if any two points are connected by at least one geodesic;
            \item \emph{uniquely geodesic space} if any two points are connected by exactly one geodesic.
        \end{itemize}
\end{itemize}
\end{defn}

\noindent Note that every geodesic space is a length space, since geodesics realize minimal lengths.

\begin{lemma}
\label{lemma:geodesiccriterion}
Let $(X,d)$ be a metric space and let $\gamma\colon[0,1]\rightarrow X$ be a curve without self-intersection. If for every $0\leq t_1<t_2<t_3\leq1$, the equality
$$
d(\gamma(t_1),\gamma(t_3))=d(\gamma(t_1),\gamma(t_2))+d(\gamma(t_2),\gamma(t_3))
$$
holds, then $\gamma$ admits a geodesic reparametrization.
\end{lemma}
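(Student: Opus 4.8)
The plan is to reduce everything to the single auxiliary function $f(t)\coloneqq d(\gamma(0),\gamma(t))$ on $[0,1]$, to show that $f$ is a homeomorphism onto $[0,L]$ where $L\coloneqq d(\gamma(0),\gamma(1))$, and then to reparametrize $\gamma$ by the inverse of $f$. The additivity hypothesis does almost all the work: it converts the metric along $\gamma$ into the single increasing coordinate $f$, and the no-self-intersection hypothesis is what upgrades ``increasing'' to ``strictly increasing.''

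First I would extract monotonicity directly from the hypothesis. Setting $t_1=0$ in the assumed identity gives, for all $0<t_2<t_3\le 1$,
$$
f(t_3)-f(t_2)=d(\gamma(0),\gamma(t_3))-d(\gamma(0),\gamma(t_2))=d(\gamma(t_2),\gamma(t_3)),
$$
and reading the same identity with $t_2=0$ shows the relation persists down to $t_2=0$. Because $\gamma$ has no self-intersection, $\gamma(t_2)\ne\gamma(t_3)$ whenever $t_2\ne t_3$, so the right-hand side is strictly positive and $f$ is strictly increasing on $[0,1]$. This computation also records the formula I will reuse at the end: $d(\gamma(t_1),\gamma(t_2))=f(t_2)-f(t_1)$ for all $t_1<t_2$.

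Next I would show $f$ is a homeomorphism $[0,1]\to[0,L]$. Continuity is immediate, since $x\mapsto d(\gamma(0),x)$ is $1$-Lipschitz and $\gamma$ is continuous. Injectivity of $\gamma$ forces $\gamma(0)\ne\gamma(1)$, hence $L=f(1)>0=f(0)$; by the intermediate value theorem the strictly increasing continuous function $f$ maps $[0,1]$ bijectively onto $[0,L]$, and a strictly monotone continuous bijection of compact intervals has continuous inverse. This surjectivity step is really the only point requiring care: it is exactly where continuity of the curve is needed (to apply the intermediate value theorem) and where injectivity is needed (without it $f$ could be constant on a subinterval and fail to be invertible there), so that the arc-length-type reparametrization below is well defined.

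Finally I would set $\tilde\gamma(s)\coloneqq\gamma\bigl(f^{-1}(sL)\bigr)$ for $s\in[0,1]$. The change of parameter $s\mapsto f^{-1}(sL)$ is a homeomorphism of $[0,1]$ fixing both endpoints, so $\tilde\gamma$ is a genuine reparametrization of $\gamma$. To verify it is a geodesic, fix $s_1<s_2$ and put $t_i=f^{-1}(s_iL)$, so $f(t_i)=s_iL$; the recorded formula then yields
$$
d(\tilde\gamma(s_1),\tilde\gamma(s_2))=d(\gamma(t_1),\gamma(t_2))=f(t_2)-f(t_1)=(s_2-s_1)L=|s_1-s_2|\cdot d(\tilde\gamma(0),\tilde\gamma(1)),
$$
which is precisely the defining condition of a geodesic. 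This completes the argument.
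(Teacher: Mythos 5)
Your proposal is correct and follows essentially the same route as the paper: both define the normalized distance-from-$\gamma(0)$ function, invert it, and reparametrize, with the geodesic property falling out of the additivity identity $d(\gamma(t_1),\gamma(t_2))=f(t_2)-f(t_1)$. You merely spell out more explicitly why $f$ is strictly increasing (via the no-self-intersection hypothesis) and continuous with continuous inverse, details the paper leaves implicit.
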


\begin{proof}
Define the normalized length function
$$
F(t)\coloneqq\frac{d(\gamma(0),\gamma(t))}{d(\gamma(0),\gamma(1))},
$$
which is strictly increasing and continuous by the distance-additivity condition, with $F(0) = 0$ and $F(1) = 1$.
Let $G\colon[0,1]\rightarrow[0,1]$ be its inverse, and set
$$
\tilde{\gamma}(s)\coloneqq\gamma(G(s)).
$$
Then, for any $0\leq s_1<s_2\leq1$,
\begin{align*}
    d(\tilde{\gamma}(s_1),\tilde{\gamma}(s_2)) & = d(\tilde{\gamma}(0),\tilde{\gamma}(s_2)) - d(\tilde{\gamma}(0),\tilde{\gamma}(s_1)) \\
    & = d(\gamma(0),\gamma(G(s_2)))-d(\gamma(0),\gamma(G(s_1))) \\
    & = (F(G(s_2))-F(G(s_1)))\cdot d(\gamma(0),\gamma(1)) \\
    & = (s_2-s_1) \cdot d(\gamma(0),\gamma(1)).
\end{align*}
Thus, $\tilde{\gamma}$ is a geodesic.
\end{proof}

\subsection{The modified metrics on the upper half-plane.}
\label{subsec:metric-on-H}
Let $\mathbb{H} = \{(x,y) \in \mathbb{R}^2 \mid y > 0\}$ denote the upper half-plane, equipped with the standard Poincar\'e hyperbolic metric $d_\hyp$. 
The geodesics are either semi-circles centered on the real axis or vertical rays orthogonal to the real axis. 
The hyperbolic distance between two points $\tau_1, \tau_2 \in \mathbb{H}$ can be calculated as follows:
Let $t_1, t_2 \in \partial\overline{\mathbb{H}}=\bR\cup\{\infty\}$ be the endpoints of the unique geodesic through $\tau_1$ and $\tau_2$, ordered so that $t_i$ is closer to $\tau_i$. Then
$$
d_\hyp(\tau_1,\tau_2)=\log\frac{|\tau_1-t_2||\tau_2-t_1|}{|\tau_2-t_2||\tau_1-t_1|},
$$
where $|\cdot|$ denotes the standard Euclidean norm.

\begin{center}
\begin{tikzpicture}[thick]
\begin{scope}
    \clip (-2.5,0) rectangle (3,3);
    \draw (0,0) circle(2.5);
    \draw (-2.5,0) -- (2.5,0);
\end{scope}
    \draw
      (-4,0) edge[-latex] node[at end,right]{} (4,0)
      (-1.5,2)  node[scale=3](i){.} node[left]{$\tau_1$}
      (2,1.5)  node[scale=3](i){.} node[right]{$\tau_2$}
      (-2.5,0)  node[scale=3](i){.} node[below]{$t_1$}
      (2.5,0)  node[scale=3](i){.} node[below]{$t_2$}
    ;
\end{tikzpicture}
\end{center}

\begin{lemma}
The hyperbolic distance admits the following equivalent expressions:
$$
d_\hyp(\tau_1,\tau_2)=\sup_{s_1,s_2\in\bR}\left(\log\frac{|\tau_1-s_2||\tau_2-s_1|}{|\tau_2-s_2||\tau_1-s_1|}\right)=\sup_{s_1,s_2\in\bQ}\left(\log\frac{|\tau_1-s_2||\tau_2-s_1|}{|\tau_2-s_2||\tau_1-s_1|}\right).
$$
\end{lemma}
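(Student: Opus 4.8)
The plan is to exploit a \emph{separation of variables} hidden in the cross ratio. Writing $g(s)\coloneqq\log\frac{|\tau_2-s|}{|\tau_1-s|}$ for $s\in\bR$, the quantity inside both suprema factors as
\[
\log\frac{|\tau_1-s_2||\tau_2-s_1|}{|\tau_2-s_2||\tau_1-s_1|}=g(s_1)-g(s_2),
\]
so that, since $s_1$ and $s_2$ vary independently, the supremum over any subset $S\subseteq\bR$ decouples:
\[
\sup_{s_1,s_2\in S}\left(g(s_1)-g(s_2)\right)=\sup_{s\in S}g(s)-\inf_{s\in S}g(s).
\]
Thus everything reduces to locating the supremum and infimum of the single-variable function $g$, first over $\bR$ and then over $\bQ$. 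I would aim to show that these are attained (in the limiting sense) precisely at the two geodesic endpoints $t_1,t_2$, since then $\sup_\bR g-\inf_\bR g=g(t_1)-g(t_2)$ is exactly the cross ratio appearing in the closed formula for $d_\hyp$ recalled just before the lemma.

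For the single-variable analysis I would invoke the $\PSL(2,\bR)$-invariance of both $d_\hyp$ and of the modulus of a cross ratio (a cross ratio is Möbius invariant, and as $s$ ranges over $\bR$ the transformed variable ranges over all of $\partial\overline{\bH}$ up to a single point, which cannot affect a supremum of a continuous function). This lets me normalize the geodesic through $\tau_1,\tau_2$ to the unit semicircle, so that $t_1=-1$, $t_2=1$ and $\tau_j=e^{i\theta_j}$ with $\theta_1>\theta_2$ in $(0,\pi)$. A direct computation then gives $g'(s)=0\iff s^2=1$, and a sign analysis of $g'$ together with the boundary behaviour $g(s)\to0$ as $s\to\pm\infty$ shows that $g$ attains its maximum at $s=-1=t_1$ and its minimum at $s=1=t_2$. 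The degenerate case of a vertical geodesic (one endpoint at $\infty$) I would treat as a limit: there one extremum is attained at the finite endpoint and the other is only approached as $s\to\infty$, which is harmless because the statement uses a supremum rather than a maximum. Combining this with the decoupling identity yields the first claimed equality.

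For the second equality I cannot transport the normalization, since $\bQ$ is not preserved by a general element of $\PSL(2,\bR)$; instead I would argue directly from continuity and density. The inequality $\sup_\bQ g\le\sup_\bR g$ is trivial, while for the reverse I pick rationals $q$ approaching the maximizing endpoint $t_1$ (or $q\to\infty$ through $\bQ$ in the vertical case) and use continuity of $g$ to get $g(q)>\sup_\bR g-\epsilon$; the analogous argument bounds $\inf_\bQ g$ from below by $\inf_\bR g$. Hence $\sup_\bQ g=\sup_\bR g$ and $\inf_\bQ g=\inf_\bR g$, and the decoupling identity gives $\sup_{s_1,s_2\in\bQ}(g(s_1)-g(s_2))=\sup_{s_1,s_2\in\bR}(g(s_1)-g(s_2))=d_\hyp$. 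The main obstacle I anticipate is the bookkeeping around the boundary and degenerate behaviour, namely justifying that passing to the geodesic endpoints, which may lie at $\infty$ or be irrational, is legitimate inside a supremum; this is precisely why expressing the extrema as suprema approached in a limit, rather than as maxima, and isolating the rational case as a separate density argument, is the essential point.
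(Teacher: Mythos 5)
Your proof is correct and follows essentially the same route as the paper: both use $\PSL(2,\bR)$-invariance of the cross-ratio to normalize the geodesic through $\tau_1,\tau_2$ and then identify the extremizing boundary points as the geodesic endpoints $t_1,t_2$. The differences are only in presentation: you normalize to the unit semicircle and locate the extrema by an explicit computation of $g'$ (where the paper, after sending $\tau_1,\tau_2$ to the imaginary axis, simply asserts the optimal choice is $\rho(s_1)=0$, $\rho(s_2)=\infty$), and you spell out the continuity-and-density argument for the $\bQ$-supremum, which the paper leaves implicit.
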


\begin{proof}
It suffices to show that the supremum is achieved when $s_1=t_1$ and $s_2=t_2$.
Let $\rho\in\PSL(2,\bR)$ be a hyperbolic isometry mapping $\tau_1,\tau_2$ to the imaginary axis with $\mathrm{Im}(\rho(\tau_1)) > \mathrm{Im}(\rho(\tau_2))$.
Since cross-ratios are preserved, we have
$$
\sup_{s_1,s_2\in\bR}\left(\log\frac{|\tau_1-s_2||\tau_2-s_1|}{|\tau_2-s_2||\tau_1-s_1|}\right)=\sup_{s_1,s_2\in\bR}\left(\log\frac{|\rho(\tau_1)-\rho(s_2)||\rho(\tau_2)-\rho(s_1)|}{|\rho(\tau_2)-\rho(s_2)||\rho(\tau_1)-\rho(s_1)|}\right)
$$
Since $\rho$ preserves the boundary $\partial\overline{\bH}=\bR\cup\{\infty\}$, it is clear that the supremum occurs when $\rho(s_1)=0$ and $\rho(s_2)=\infty$, i.e.~when $\rho(\tau_1),\rho(\tau_2),\rho(s_1),\rho(s_2)$ are on the same geodesic. This happens if and only if $s_1=t_1$ and $s_2=t_2$.
\end{proof}

We now introduce a modified metric on $\mathbb{H}$ that will be used in later sections.

\begin{defn}
\label{defn:H-metric-Z}
The metric $d_\bZ$ on the upper half-plane $\bH$ is defined by:
$$
d_\bZ(\tau_1,\tau_2)\coloneqq\sup_{s_1,s_2\in\bZ}\left(\log\frac{|\tau_1-s_2||\tau_2-s_1|}{|\tau_2-s_2||\tau_1-s_1|}\right).
$$
Note that it naturally extends to a metric on $\bH\cup(\bR\backslash\bZ)$ via the same formula.
\end{defn}

Proposition~\ref{prop:metric-dZ-geodesic-etc} will establish that $(\bH\cup(\bR\backslash\bZ),d_\bZ)$ is geodesic yet not uniquely geodesic.

\section{A stability space that is not a length space}
This section establishes that the stability space $(\Stab(\Db(\mathbb{P}^1)), d)$ with its canonical metric fails to be a length space: Section~\ref{subsec:3-1-stability-P1} reviews stability conditions on $\mathbb{P}^1$; Sections~\ref{subsec:3-2-dist-geom}--\ref{subsec:3-4-dist-alg} provide explicit distance formulas; Section~\ref{subsec:proof-of-thm} presents the proof of Theorem~\ref{thm:main}.

\subsection{Stability conditions on the projective line.}
\label{subsec:3-1-stability-P1}
The space of Bridgeland stability conditions on $\Db(\bP^1)$ has been completely characterized by Okada \cite{Okada}. In this subsection, we recall essential results from this work and introduce notation for parametrizing the stability space, which will be used throughout our analysis.

Stability conditions on $\Db(\bP^1)$ are of two distinct types:
\begin{itemize}
    \item \emph{Geometric stability conditions}, with respect to which all line bundles and skyscraper sheaves are stable.
    \item \emph{Algebraic stability conditions}, with respect to which only $\cO(k)$, $\cO(k+1)$, and their shifts are stable for some $k\in\bZ$.
\end{itemize}

\noindent\textbf{Geometric stability conditions.}
Denote by $\Stab^\Geo(\Db(\bP^1))$ the subset of geometric stability conditions in $\Stab(\Db(\bP^1))$. There is an isomorphism $\Stab^\Geo(\Db(\bP^1))\cong\bH\times\bC$, where the $\bC$-factor corresponds to the free $\mathbb{C}$-action on stability conditions.
A section of this action is determined by $\tau \in \mathbb{H}$ through:
\begin{itemize}
    \item The central charge is $Z(E)=-\deg(E)+\tau\cdot\rank(E)$.
    \item Skyscraper sheaves have phase $\phi(\cO_x)=1$.
    \item Line bundles have phases $0<\phi(\cO(n))=\frac{1}{\pi}\arg(\tau-n)<1$.
\end{itemize}

\begin{notn}
\label{notn:geometric}
The geometric stability conditions admit a parametrization by triples 
$$
(\tau, x, y) \in \mathbb{H} \times \mathbb{R}^2 \cong \mathbb{H} \times \mathbb{C}.
$$
For each such triple, let $\sigma_{\tau,x,y}$ denote the unique geometric stability condition satisfying:
\begin{itemize}
    \item The central charge is $Z_{\tau,x,y}(E)=e^{x+i\pi y}\left(-\deg(E)+\tau\cdot\rank(E)\right)$.
    \item Skyscraper sheaves have phase $\phi_{\tau,x,y}(\cO_x)=y+1$.
    \item Line bundles have phases $y<\phi_{\tau,x,y}(\cO(n))=y+\frac{1}{\pi}\arg(\tau-n)<y+1$.
\end{itemize}
Note that for each $\tau \in \mathbb{H}$, the stability conditions $\{\sigma_{\tau,x,y}:(x,y)\in\bR^2\}$ form a single $\mathbb{C}$-orbit.
We denote by $\bar{\sigma}_\tau$ the corresponding element in $\Stab^\Geo(\Db(\bP^1))/\bC\cong\bH$.
\end{notn}

\noindent\textbf{The subsets $X_k$.}
Beyond geometric stability conditions, define for each $k\in\bZ$ the subset
$$
X_k\coloneqq\left\{\sigma\in\Stab(\Db(\bP^1)):\cO(k)\text{ and }\cO(k+1)\text{ are }\sigma\text{-stable}\right\}\subseteq\Stab(\Db(\bP^1)).
$$
Each $X_k$ contains all geometric stability conditions. Moreover,  the algebraic stability conditions in distinct $X_k$ are disjoint. This yields the decomposition:
$$
\Stab(\Db(\bP^1))=\Stab^\Geo(\Db(\bP^1))\coprod\left(\coprod_{k\in\bZ}\left(X_k\backslash\Stab^\Geo(\Db(\bP^1))\right)\right).
$$

For each $k\in\bZ$, the set $X_k$ is isomorphic to $\bH\times\bC$, where the $\bC$-factor corresponds to the free $\mathbb{C}$-action on stability conditions. A section of this action is determined by $(\alpha,\beta)\in\bR\times\bR_{>0}\cong\bH$ through the following conditions:
\begin{itemize}
    \item Both $\cO(k)$ and $\cO(k+1)$ are stable.
    \item $Z(\cO(k))=1$ and $\phi(\cO(k))=0$.
    \item $Z(\cO(k+1))=e^{\alpha+i\pi\beta}$ and $\phi(\cO(k+1))=\beta>0$.
\end{itemize}

\begin{notn}
\label{notn:algebraic}
For each $k\in\bZ$, the stability conditions in $X_k$ admit a parametrization by quadruples
$$
(\alpha,\beta,x,y)\in\bR\times\bR_{>0}\times\bR^2\cong\bH\times\bC.
$$
For each such quadruple, let $\sigma_{\alpha,\beta,x,y}$ denote the unique stability condition satisfying:
\begin{itemize}
    \item Both $\cO(k)$ and $\cO(k+1)$ are $\sigma_{\alpha,\beta,x,y}$-stable.
    \item $Z_{\alpha,\beta,x,y}(\cO(k))=e^{x+i\pi y}$ and $\phi_{\alpha,\beta,x,y}(\cO(k))=y$.
    \item $Z_{\alpha,\beta,x,y}(\cO(k+1))=e^{x+\alpha+i\pi(y+\beta)}$ and $\phi_{\alpha,\beta,x,y}(\cO(k+1))=y+\beta$.
\end{itemize}
Note that for each $(\alpha,\beta)\in\bH$, the stability conditions $\{\sigma_{\alpha,\beta,x,y}:(x,y)\in\bR^2\}$ form a single $\mathbb{C}$-orbit.
We denote by $\bar{\sigma}_{\alpha,\beta}$ the corresponding element in $X_k/\bC\cong\bH$.
\end{notn}

Under this parametrization of $X_k$, the geometric stability conditions correspond precisely to the parameter subset
$$
\left\{(\alpha,\beta,x,y)\in\bR^2\times\bR^2:0<\beta<1\right\}.
$$
The coordinate transformation between this parametrization and the geometric parametrization in Notation~\ref{notn:geometric} is given by the following lemma, obtained directly by matching the central charges and phases of $\mathcal{O}(k)$ and $\mathcal{O}(k+1)$ under both descriptions.

\begin{lemma}
\label{lemma:change-of-var}
Let $\sigma$ be a geometric stability condition with $X_k$-coordinates $(\alpha, \beta, x_k, y_k) \in \mathbb{R}^2 \times \mathbb{R}^2$ under Notation~\ref{notn:algebraic}, where $0 < \beta < 1$. Then its geometric coordinates $(\tau, x, y)$ from Notation~\ref{notn:geometric} are determined by:
\begin{align*}
    \tau & = k+\frac{1}{1-e^{\alpha+i\pi\beta}}, \\
    x & = x_k + \log\left|1-e^{\alpha+i\pi\beta}\right|, \\
    y & = y_k + \frac{1}{\pi}\arg\left(1-e^{\alpha+i\pi\beta}\right).
\end{align*}
Here, the complex number $1-e^{\alpha+i\pi\beta}$ lies in the lower half-plane, with $\arg\left(1-e^{\alpha+i\pi\beta}\right)\in(-\pi,0)$.
\end{lemma}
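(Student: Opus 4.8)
The plan is to translate both parametrizations into explicit formulas for the central charges and phases of the two distinguished objects $\cO(k)$ and $\cO(k+1)$, and then solve the resulting system for $(\tau,x,y)$. Since $\deg(\cO(n))=n$ and $\rank(\cO(n))=1$, the geometric central charge of Notation~\ref{notn:geometric} reads $Z_{\tau,x,y}(\cO(n))=e^{x+i\pi y}(\tau-n)$, while Notation~\ref{notn:algebraic} prescribes $Z(\cO(k))=e^{x_k+i\pi y_k}$ and $Z(\cO(k+1))=e^{x_k+\alpha+i\pi(y_k+\beta)}$. Equating the two descriptions of $\sigma$ on $\cO(k)$ and on $\cO(k+1)$ gives two complex equations, which I will exploit in a particular order.

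First I would take the quotient of the two central-charge equations. The common geometric prefactor $e^{x+i\pi y}$ and the algebraic prefactor $e^{x_k+i\pi y_k}$ both cancel, leaving
\[
\frac{\tau-k-1}{\tau-k}=e^{\alpha+i\pi\beta}.
\]
Solving this linear-fractional relation for $\tau$ yields $\tau=k+(1-e^{\alpha+i\pi\beta})^{-1}$, the first asserted formula. Substituting $\tau-k=(1-e^{\alpha+i\pi\beta})^{-1}$ back into the $\cO(k)$ equation gives $e^{x+i\pi y}=e^{x_k+i\pi y_k}\bigl(1-e^{\alpha+i\pi\beta}\bigr)$; taking moduli immediately produces $x=x_k+\log|1-e^{\alpha+i\pi\beta}|$, and taking arguments produces the $y$-formula up to an additive integer multiple of $2$.

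The only real subtlety, and the step I expect to be the main obstacle, is pinning down the correct branch of $\arg(1-e^{\alpha+i\pi\beta})$, since the central charges determine $y$ only modulo $2$ (the period of $e^{i\pi y}$). Here I would use the honest phase data rather than the central charges alone: matching $\phi_{\tau,x,y}(\cO(k))=y+\tfrac{1}{\pi}\arg(\tau-k)$, with $\arg(\tau-k)\in(0,\pi)$, against the algebraic value $\phi(\cO(k))=y_k$, and using $\arg(\tau-k)=-\arg(1-e^{\alpha+i\pi\beta})$, fixes $y=y_k+\tfrac{1}{\pi}\arg(1-e^{\alpha+i\pi\beta})$ exactly. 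Finally I would record the geometric input that $0<\beta<1$ forces $\sin(\pi\beta)>0$, so $e^{\alpha+i\pi\beta}$ lies in the open upper half-plane and hence $1-e^{\alpha+i\pi\beta}$ lies in the open lower half-plane, giving $\arg(1-e^{\alpha+i\pi\beta})\in(-\pi,0)$; this simultaneously confirms $\imag(\tau)>0$ so that $\tau\in\bH$, after which the consistency check on the remaining phase $\phi(\cO(k+1))=y_k+\beta$ is automatic.
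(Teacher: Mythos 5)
Your proposal is correct and follows exactly the route the paper indicates: the lemma is stated without a separate proof, justified only by the remark that it is ``obtained directly by matching the central charges and phases of $\mathcal{O}(k)$ and $\mathcal{O}(k+1)$ under both descriptions,'' which is precisely your computation. Your careful treatment of the branch of $\arg$ via the phase data (rather than the central charge alone) fills in the one point the paper leaves implicit, and it is done correctly.
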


\begin{rmk}
\label{rmk:boundary-parametrization}
Let $\overline{\Stab^\Geo(\Db(\bP^1))}\subseteq\Stab(\Db(\bP^1))$ 
denote the closure of the geometric stability conditions in the full stability space. 
The boundary
$$
\partial\overline{\Stab^\Geo(\Db(\bP^1))}=\overline{\Stab^\Geo(\Db(\bP^1))}\backslash\Stab^\Geo(\Db(\bP^1))
$$
consists of $\bZ$-many connected components, with each component contained in some $X_k$.

Using Notation~\ref{notn:geometric}, the boundary points admit the parametrization
$$
(\tau,x,y)\in(\bR\backslash\bZ)\times\bR^2=\coprod_{k\in\bZ}(k,k+1)\times\bR^2.
$$
For $\tau\in(k,k+1)$, the corresponding stability condition is uniquely determined by:
\begin{itemize}
    \item The central charge is $Z_{\tau,x,y}(E)=e^{x+i\pi y}\left(-\deg(E)+\tau\cdot\rank(E)\right)$.
    \item Skyscraper sheaves have phase $\phi_{\tau,x,y}(\cO_x)=y+1$.
    \item Line bundles have phases $\phi_{\tau,x,y}(\cO(n))=y+1$ if $n\geq k+1$, and $\phi_{\tau,x,y}(\cO(n))=y$ if $n\leq k$.
\end{itemize}
Such a stability condition lies in $X_k$, with $\beta=1$ in Notation~\ref{notn:algebraic}. The components of $(\tau,x,y)\in(k,k+1)\times\bR^2$ and $(\alpha,1,x_k,y_k)\in X_k$ are related as in Lemma~\ref{lemma:change-of-var}:
\begin{align*}
    \tau & = k+\frac{1}{1+e^{\alpha}}, \\
    x & = x_k + \log\left(1+e^{\alpha}\right), \\
    y & = y_k.
\end{align*}
\end{rmk}

\begin{defn}
\label{defn:projection-to-boundary}
Define the projection map
$$
p\colon\Stab(\Db(\bP^1))\rightarrow\overline{\Stab^\Geo(\Db(\bP^1))}
$$
as follows:
\begin{itemize}
    \item For $\sigma\in\Stab^\Geo(\Db(\bP^1))$, let $p(\sigma)\coloneqq\sigma$.
    \item For $\sigma_{\alpha,\beta,x,y}\in X_k\backslash\Stab^\Geo(\Db(\bP^1))$, let $p(\sigma_{\alpha,\beta,x,y})\coloneqq\sigma_{\alpha,1,x,y}$.
\end{itemize}
Similarly, define
$$
\bar{p}\colon\Stab(\Db(\bP^1))/\bC\rightarrow\overline{\Stab^\Geo(\Db(\bP^1))}/\bC
$$
by:
\begin{itemize}
    \item For $\bar\sigma\in\Stab^\Geo(\Db(\bP^1))/\bC$, let $\bar{p}(\bar\sigma)\coloneqq\bar\sigma$,
    \item For $\bar\sigma_{\alpha,\beta}\in (X_k/\bC)\backslash(\Stab^\Geo(\Db(\bP^1))/\bC)$, let $\bar{p}(\bar\sigma_{\alpha,\beta})\coloneqq\bar\sigma_{\alpha,1}$.
\end{itemize}
These maps collapse algebraic stability conditions to the boundary of the geometric stability space while leaving geometric stability conditions unchanged.
\end{defn}

\subsection{Distance between geometric stability conditions.}
\label{subsec:3-2-dist-geom}
In this subsection, we derive explicit formulas for the distances between geometric stability conditions.

\begin{lemma}
The metrics on $\Stab(\Db(\bP^1))$ and $\Stab(\Db(\bP^1))/\bC$ (cf.~Definitions~\ref{defn-bridgeland-dist} and \ref{defn:bridgeland-dist-Cquotient}) can be computed by taking the supremum over only line bundles and skyscraper sheaves on $\bP^1$.
\end{lemma}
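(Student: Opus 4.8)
The plan is to exploit the very simple structure of $\Db(\bP^1)$ to collapse the defining supremum onto indecomposable objects, and then onto line bundles and skyscrapers. First I would note that $\bP^1$ is a smooth curve, so $\Coh(\bP^1)$ is hereditary and every nonzero $E\in\Db(\bP^1)$ splits as a finite direct sum of shifts of its cohomology sheaves; each coherent sheaf in turn splits as a locally free part (a sum of line bundles $\cO(n)$, by Grothendieck's theorem) and a torsion part (a sum of indecomposable fat points $\cO_{a\cdot x}=\cO/\mathfrak{m}_x^a$). Hence every $E$ is a finite direct sum of shifts of line bundles and fat points. The three quantities entering $d$, namely $\bigl|\log(m_{\sigma_1}(E)/m_{\sigma_2}(E))\bigr|$ and $|\phi^{\pm}_{\sigma_1}(E)-\phi^{\pm}_{\sigma_2}(E)|$, all behave well under $\oplus$: since the Harder--Narasimhan factors of a direct sum are the union of those of the summands, one has $m_\sigma(\bigoplus_j E_j)=\sum_j m_\sigma(E_j)$, so the mass ratio of a direct sum is a weighted mediant of the summand ratios and is therefore bounded in absolute value by the maximum over the summands; likewise $\phi^+$ (resp.\ $\phi^-$) of a direct sum is the maximum (resp.\ minimum) of the summand values, and the elementary bound $|\max_j a_j-\max_j b_j|\le\max_j|a_j-b_j|$ controls the phase terms. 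Consequently the supremum over all $E$ already equals the supremum over indecomposable summands.

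Next I would strip off the shifts. Each of the three quantities is shift-invariant: $m_\sigma(F[m])=m_\sigma(F)$ leaves the mass ratio unchanged, while $\phi^{\pm}_\sigma(F[m])=\phi^{\pm}_\sigma(F)+m$ makes the additive constant cancel in the phase differences. This reduces the supremum to the honest sheaves $\cO(n)$ and $\cO_{a\cdot x}$, and it remains only to eliminate the fat points with $a\ge 2$.

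The crux is the claim that for every $\sigma$ one has $m_\sigma(\cO_{a\cdot x})=a\,m_\sigma(\cO_x)$ and $\phi^{\pm}_\sigma(\cO_{a\cdot x})=\phi^{\pm}_\sigma(\cO_x)$; granting this, the factor $a$ cancels in the mass ratio and the phases agree, so $\cO_{a\cdot x}$ contributes exactly what $\cO_x$ does and the fat points may be discarded, leaving only line bundles and skyscrapers. To prove the claim I would fix $k$ so that $\cO(k),\cO(k+1)$ are $\sigma$-stable (every geometric $\sigma$ lies in every $X_k$, and every algebraic $\sigma$ in a unique $X_k$) and use the locally free resolution $0\to\cO(k)^{\oplus a}\to\cO(k+1)^{\oplus a}\to\cO_{a\cdot x}\to 0$ coming from a Jordan-block matrix of linear forms. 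Rotating the associated triangle exhibits $\cO_{a\cdot x}$ as an extension of $\cO(k)[1]^{\oplus a}$ by $\cO(k+1)^{\oplus a}$, both semistable. In an algebraic chamber ($\beta>1$) their phases $y+\beta>y+1$ are strictly decreasing, so by uniqueness this is the Harder--Narasimhan filtration of $\cO_{a\cdot x}$, which is thus $a$ copies of the filtration of $\cO_x$; in a geometric chamber $\cO_{a\cdot x}$ is instead a genuine semistable torsion sheaf of the skyscraper's phase, with mass its length times $m_\sigma(\cO_x)$. Either way the multiplicities scale by $a$ and the extremal phases are unchanged. Finally, the same chain of reductions applies verbatim to the quotient metric $\bar d$ of Definition~\ref{defn:bridgeland-dist-Cquotient}: the extra infimum over the $\bC$-shift $(x,y)$ commutes with the passage to line bundles and skyscrapers, since subtracting the constants $x$ and $y$ does not disturb the mediant and maximum estimates. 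I expect this fat-point step to be the only genuine obstacle, above all producing the Jordan-block resolution and verifying that it yields the actual Harder--Narasimhan filtration uniformly across all chambers, including the regime $\beta\ge 2$; the direct-sum and shift reductions are routine.
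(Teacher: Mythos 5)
Your proposal is correct and follows essentially the same route as the paper, whose entire proof is the one-line observation that every coherent sheaf on $\bP^1$ decomposes into a direct sum of line bundles and skyscraper sheaves; you simply supply the routine verifications (additivity of mass and max/min behavior of $\phi^{\pm}$ under direct sums, shift invariance, compatibility with the $\bC$-quotient). Your careful treatment of the fat points $\cO/\mathfrak{m}_x^a$ via the Jordan-block resolution is a genuine improvement in precision, since the paper's phrasing quietly elides the fact that indecomposable torsion sheaves of length $a\ge 2$ are not skyscrapers, and your scaling argument $m_\sigma(\cO_{a\cdot x})=a\,m_\sigma(\cO_x)$, $\phi^{\pm}_\sigma(\cO_{a\cdot x})=\phi^{\pm}_\sigma(\cO_x)$ is exactly what is needed to close that gap.
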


\begin{proof}
The claim follows from the fact that every coherent sheaf on $\mathbb{P}^1$ decomposes into a finite direct sum of line bundles and skyscraper sheaves.
\end{proof}

The distance between geometric stability conditions is given by the following lemma.

\begin{lemma}
\label{lemma:geometric-distance}
Let $\sigma_1=\sigma_{\tau_1,x_1,y_1}$ and $\sigma_2=\sigma_{\tau_1,x_1,y_1}$ be two geometric stability conditions (see Notation~\ref{notn:geometric}). Their distance is given by
$$
d(\sigma_1,\sigma_2)=\max\left\{d_\mass(\sigma_1,\sigma_2),d_\phase(\sigma_1,\sigma_2)\right\},
$$
where
\begin{align}
    d_\mass(\sigma_1,\sigma_2) & =\max\left\{\sup_{n\in\bZ}\left\{\left|\log\frac{|\tau_1-n|}{|\tau_2-n|}+(x_1-x_2)\right|\right\},|x_1-x_2|\right\}, \label{eqn:mass-distance-geom}\tag{3.a}\\
    d_\phase(\sigma_1,\sigma_2) & =\max\left\{\sup_{n\in\bZ}\left\{\left|\frac{1}{\pi}
\arg\left(\frac{\tau_1-n}{\tau_2-n}\right)+(y_1-y_2)\right|\right\},|y_1-y_2|\right\}. \nonumber
\end{align}
Here, $\arg\left(\frac{\tau_1-n}{\tau_2-n}\right)$ takes values in $(-\pi,\pi)$.
\end{lemma}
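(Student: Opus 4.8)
The plan is to invoke the previous lemma to replace the supremum over all of $\Db(\bP^1)$ by a supremum over the line bundles $\cO(n)$, $n\in\bZ$, and the skyscraper sheaves $\cO_x$, and then to read off each of the three quantities in Definition~\ref{defn-bridgeland-dist} directly from the central charge recorded in Notation~\ref{notn:geometric}. The decisive simplification is that, for a geometric stability condition, all of these objects are stable, so their Harder--Narasimhan filtrations are trivial: one has $m_{\sigma_i}(E)=|Z_{\sigma_i}(E)|$ and $\phi^+_{\sigma_i}(E)=\phi^-_{\sigma_i}(E)=\phi_{\sigma_i}(E)$. This collapses the two phase terms of the distance into a single phase difference per object, so that the whole computation reduces to evaluating $|Z_{\sigma_i}(E)|$ and $\phi_{\sigma_i}(E)$ on two families of objects.

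Next I would record the masses and phases. From $Z_{\tau,x,y}(E)=e^{x+i\pi y}\bigl(-\deg(E)+\tau\cdot\rank(E)\bigr)$ one obtains, for the line bundle $\cO(n)$ (rank $1$, degree $n$), the values $m_\sigma(\cO(n))=e^{x}|\tau-n|$ and $\phi_\sigma(\cO(n))=y+\frac{1}{\pi}\arg(\tau-n)$, and for the skyscraper $\cO_x$ (rank $0$, degree $1$), the values $m_\sigma(\cO_x)=e^{x}$ and $\phi_\sigma(\cO_x)=y+1$. Substituting these into $\log\frac{m_{\sigma_1}(E)}{m_{\sigma_2}(E)}$ and $\phi_{\sigma_1}(E)-\phi_{\sigma_2}(E)$ yields, for $\cO(n)$, the expressions $(x_1-x_2)+\log\frac{|\tau_1-n|}{|\tau_2-n|}$ and $(y_1-y_2)+\frac{1}{\pi}\bigl(\arg(\tau_1-n)-\arg(\tau_2-n)\bigr)$, and for $\cO_x$, the expressions $x_1-x_2$ and $y_1-y_2$. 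Taking absolute values and then the supremum over $n\in\bZ$ together with the maximum over the two object types reproduces exactly the two displayed formulas for $d_\mass$ and $d_\phase$.

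The only step that genuinely needs care is the branch of the argument in the phase term, and this is where I expect the bookkeeping to matter most. Since $\tau_1,\tau_2\in\bH$, both $\tau_1-n$ and $\tau_2-n$ lie in the upper half-plane, so each of $\arg(\tau_1-n)$ and $\arg(\tau_2-n)$ lies in $(0,\pi)$ and their difference lies in $(-\pi,\pi)$; this is precisely the range in which $\arg\bigl(\frac{\tau_1-n}{\tau_2-n}\bigr)$ is to be interpreted, so $\arg(\tau_1-n)-\arg(\tau_2-n)=\arg\bigl(\frac{\tau_1-n}{\tau_2-n}\bigr)$ with no $2\pi$ ambiguity, matching the normalization asserted in the statement. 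I would also note that the separate skyscraper terms $|x_1-x_2|$ and $|y_1-y_2|$ are consistent with the line-bundle terms: as $n\to\pm\infty$ one has $\tau_i-n\to\infty$ along the negative real direction, forcing $\log\frac{|\tau_1-n|}{|\tau_2-n|}\to0$ and $\arg\bigl(\frac{\tau_1-n}{\tau_2-n}\bigr)\to0$, so these constant terms arise as limiting values of the supremum as well as from $\cO_x$ directly. Keeping them as explicit entries in the maxima ensures the formula is correct even when the suprema over $n$ are not attained.
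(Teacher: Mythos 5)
Your proposal is correct and follows exactly the paper's (much terser) argument: the paper proves this lemma in one line by observing that all line bundles and skyscraper sheaves are stable for geometric stability conditions, which is precisely the reduction you make before carrying out the explicit mass and phase computations. Your additional care with the branch of $\arg$ and the consistency of the skyscraper terms is a correct elaboration of details the paper leaves implicit.
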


\begin{proof}
The claim follows from the fact that all line bundles and skyscraper sheaves are stable with respect to geometric stability conditions. 
\end{proof}

\begin{rmk}
\label{rmk:mass-distance-extend-closure}
The mass-distance formula (\ref{eqn:mass-distance-geom}) for $d_\mass$ extends naturally to the closure $\overline{\Stab^\Geo(\Db(\bP^1))}\cong(\bH\cup(\bR\backslash\bZ))\times\bR^2$, 
since line bundles and skyscraper sheaves remain semistable for stability conditions on the boundary of $\Stab^\Geo(\Db(\bP^1))$.
In contrast, the formula for the phase-distance $d_\phase$ does not admit a direct extension to the closure due to the multi-valuedness of $\arg$.
\end{rmk}

The following lemma will be useful for computing the quotient metric on $\Stab(\Db(\bP^1))/\bC$.

\begin{lemma}
\label{lemma:supinf}
Let $\{A_\alpha\}$ be a set of real numbers such that
$$
\sup_\alpha A_\alpha\geq0\geq\inf_\alpha A_\alpha.
$$
Then
$$
\inf_{\lambda\in\bR}\left\{\max\left\{\sup_\alpha\left\{|A_\alpha+\lambda|\right\},|\lambda|\right\}\right\}=\frac{1}{2}\left(\sup_\alpha A_\alpha-\inf_\alpha A_\alpha\right).
$$
\end{lemma}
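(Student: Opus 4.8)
The plan is to reduce the two–parameter expression to a one–dimensional minimization of the maximum of two affine functions of $\lambda$, whose minimizer can be read off directly. Throughout I would set $M\coloneqq\sup_\alpha A_\alpha$ and $m\coloneqq\inf_\alpha A_\alpha$, so that the hypothesis becomes simply $M\geq0\geq m$, and the goal is to show the infimum equals $\tfrac12(M-m)$.

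The first step is to evaluate the inner supremum $\sup_\alpha|A_\alpha+\lambda|$ for fixed $\lambda$. Writing $|A_\alpha+\lambda|=\max\{A_\alpha+\lambda,\,-A_\alpha-\lambda\}$ and applying the elementary identity $\sup_\alpha\max\{g(\alpha),h(\alpha)\}=\max\{\sup_\alpha g(\alpha),\sup_\alpha h(\alpha)\}$, I would obtain
$$
\sup_\alpha|A_\alpha+\lambda|=\max\{M+\lambda,\,-m-\lambda\}.
$$
The point of this formulation is that it requires no attainment of the sup or inf by any particular $A_\alpha$; the interchange of $\sup_\alpha$ with the two branches of the absolute value is valid for suprema in general.

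The second, and genuinely load–bearing, step is to use the hypothesis $M\geq0\geq m$ to discard the $|\lambda|$ term. Since $M\geq0$ gives $M+\lambda\geq\lambda$ and $m\leq0$ gives $-m-\lambda\geq-\lambda$, the quantity $|\lambda|=\max\{\lambda,-\lambda\}$ is always dominated by $\max\{M+\lambda,-m-\lambda\}$. Hence the objective collapses to
$$
\max\Bigl\{\sup_\alpha|A_\alpha+\lambda|,\,|\lambda|\Bigr\}=\max\{M+\lambda,\,-m-\lambda\}=:f(\lambda),
$$
and it is precisely here that the sign condition on $M$ and $m$ is used; without it the $|\lambda|$ term could not be removed.

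Finally I would minimize $f$, the maximum of the increasing affine function $M+\lambda$ and the decreasing affine function $-m-\lambda$. Such a function is convex and piecewise linear, decreasing then increasing, so its minimum is attained where the two graphs cross, at $\lambda_\ast=-(M+m)/2$, yielding $f(\lambda_\ast)=(M-m)/2=\tfrac12\bigl(\sup_\alpha A_\alpha-\inf_\alpha A_\alpha\bigr)$. I expect no real obstacle beyond exercising care in the first step: one must make sure the $\sup/\max$ interchange is stated for suprema rather than finite maxima, so that the argument goes through even when the $A_\alpha$ fail to achieve $M$ or $m$. Once that identity is in place, the remainder is a routine optimization of a piecewise–linear convex function.
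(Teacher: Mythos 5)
Your proof is correct and takes essentially the same route as the paper: both arguments hinge on the observation that the sign condition $M\geq 0\geq m$ makes the $|\lambda|$ term harmless, and both identify the optimal shift $\lambda_\ast=-(M+m)/2$ yielding the value $\tfrac12(M-m)$. The only cosmetic difference is that you first collapse the objective exactly to $\max\{M+\lambda,-m-\lambda\}$ and then minimize, whereas the paper states the lower bound for all $\lambda$ and then verifies that this particular $\lambda_\ast$ attains it.
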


\begin{proof}
It is clear that 
$$
\max\left\{\sup_\alpha\left\{|A_\alpha+\lambda|\right\}\right\}\geq\frac{1}{2}\left(\sup_\alpha A_\alpha-\inf_\alpha A_\alpha\right) \quad \text{ for any } \lambda\in\bR.
$$
Therefore, it suffices to show that the infimum can be obtained by some $\lambda\in\bR$. Let
$$
\lambda=-\frac{1}{2}\left(\sup_\alpha A_\alpha+\inf_\alpha A_\alpha\right).
$$
Then
$$
|A_\alpha+\lambda|\leq\frac{1}{2}\left(\sup_\alpha A_\alpha-\inf_\alpha A_\alpha\right) \quad \text{ for any } \alpha,
$$
and 
$$
|\lambda|=\frac{1}{2}\left|\sup_\alpha A_\alpha+\inf_\alpha A_\alpha\right|\leq\frac{1}{2}\left(\sup_\alpha A_\alpha-\inf_\alpha A_\alpha\right)
$$
since $\sup_\alpha A_\alpha\geq0\geq\inf_\alpha A_\alpha$.
\end{proof}

The quotient metric on $\Stab^\Geo(\Db(\bP^1))/\bC$ admits the following explicit expression.

\begin{prop}
\label{prop:geometric-distance/C}
For any two geometric stability conditions modulo scaling $\bar{\sigma}_{\tau_1}, \bar{\sigma}_{\tau_2} \in \Stab^\Geo(\Db(\mathbb{P}^1))/\mathbb{C} \cong \mathbb{H}$, their distance in the quotient metric is given by:
$$
\bar{d}(\bar{\sigma}_{\tau_1},\bar{\sigma}_{\tau_2})=\max\left\{\frac{1}{2}d_\bZ(\tau_1,\tau_2),\frac{1}{2\pi}\left(\sup_{n\in\bZ}\left\{\arg\left(\frac{\tau_1-n}{\tau_2-n}\right)\right\}-\inf_{m\in\bZ}\left\{\arg\left(\frac{\tau_1-m}{\tau_2-m}\right)\right\}\right)\right\}.
$$
Here, $d_\bZ$ is the metric on $\bH$ from Definition~\ref{defn:H-metric-Z}, and the arguments $\arg$ take values in $(-\pi,\pi)$.
\end{prop}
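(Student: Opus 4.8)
The plan is to compute the two constituent distances $\bar{d}_\mass$ and $\bar{d}_\phase$ separately, since by Definition~\ref{defn:bridgeland-dist-Cquotient} we have $\bar{d}(\bar{\sigma}_{\tau_1},\bar{\sigma}_{\tau_2})=\max\{\bar{d}_\mass,\bar{d}_\phase\}$; the infimum over the $\bC$-action decouples precisely because the $x$-component scales only masses and the $y$-component shifts only phases. By $\bC$-invariance of $d$ I fix the representatives $\sigma_i=\sigma_{\tau_i,0,0}$ and take the infimum over a single real parameter in each case. Since the distance is computed over line bundles and skyscraper sheaves alone (by the first lemma of Section~\ref{subsec:3-2-dist-geom}), and all of these are stable for geometric stability conditions, I only need the data recorded in Notation~\ref{notn:geometric}: the line bundles have $m_{\sigma_i}(\cO(n))=e^{x_i}|\tau_i-n|$ and $\phi_{\sigma_i}(\cO(n))=y_i+\tfrac{1}{\pi}\arg(\tau_i-n)$, while the skyscraper sheaves have $m_{\sigma_i}(\cO_p)=e^{x_i}$ and $\phi_{\sigma_i}(\cO_p)=y_i+1$.

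For the mass distance, set $A_n\coloneqq\log\frac{|\tau_1-n|}{|\tau_2-n|}$. Then Lemma~\ref{lemma:geometric-distance} gives
$$
\bar{d}_\mass(\bar{\sigma}_{\tau_1},\bar{\sigma}_{\tau_2})=\inf_{\lambda\in\bR}\max\left\{\sup_{n\in\bZ}|A_n+\lambda|,\,|\lambda|\right\},
$$
where the $|\lambda|$ term is the skyscraper contribution. To invoke Lemma~\ref{lemma:supinf} I must check $\sup_n A_n\geq0\geq\inf_n A_n$; this holds because $A_n\to0$ as $n\to\pm\infty$, so that $\sup_n A_n\geq\limsup_n A_n=0$ and $\inf_n A_n\leq\liminf_n A_n=0$. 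Lemma~\ref{lemma:supinf} then yields $\bar{d}_\mass=\tfrac{1}{2}(\sup_n A_n-\inf_m A_m)$. Finally I identify this with $\tfrac{1}{2}d_\bZ(\tau_1,\tau_2)$ by expanding the defining cross-ratio from Definition~\ref{defn:H-metric-Z}: for $s_1,s_2\in\bZ$ one has
$$
\log\frac{|\tau_1-s_2||\tau_2-s_1|}{|\tau_2-s_2||\tau_1-s_1|}=A_{s_2}-A_{s_1},
$$
so taking the supremum over $s_1,s_2$ splits into $\sup_n A_n-\inf_m A_m$, giving $d_\bZ(\tau_1,\tau_2)=\sup_n A_n-\inf_m A_m$.

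The phase distance is handled identically. Set $B_n\coloneqq\tfrac{1}{\pi}\arg\frac{\tau_1-n}{\tau_2-n}$. The skyscraper phases cancel (both equal $1$ at the chosen representatives), so Lemma~\ref{lemma:geometric-distance} gives $\bar{d}_\phase=\inf_{\lambda}\max\{\sup_n|B_n+\lambda|,\,|\lambda|\}$. Since $\frac{\tau_1-n}{\tau_2-n}\to1$ as $n\to\pm\infty$, we have $B_n\to0$, hence again $\sup_n B_n\geq0\geq\inf_n B_n$, and Lemma~\ref{lemma:supinf} applies to give $\bar{d}_\phase=\tfrac{1}{2}(\sup_n B_n-\inf_m B_m)=\tfrac{1}{2\pi}\bigl(\sup_n\arg\frac{\tau_1-n}{\tau_2-n}-\inf_m\arg\frac{\tau_1-m}{\tau_2-m}\bigr)$. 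Taking the maximum of the two expressions produces the stated formula.

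The main obstacle is the bookkeeping needed to invoke Lemma~\ref{lemma:supinf}: confirming that the skyscraper sheaves contribute exactly the $|\lambda|$ term, so that the sign condition $\sup\geq0\geq\inf$ of that lemma is genuinely the one in play, and verifying that condition through the asymptotics $A_n,B_n\to0$. The one genuinely non-formal step is recognizing the mass optimum as $\tfrac{1}{2}d_\bZ$ via the cross-ratio splitting $A_{s_2}-A_{s_1}$; the phase case needs no such splitting since the target is already phrased as $\sup-\inf$ of arguments. A minor caveat to track is the branch of $\arg$, valued in $(-\pi,\pi)$, which keeps the phase asymptotics and the identifications consistent.
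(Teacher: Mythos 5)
Your proposal is correct and follows essentially the same route as the paper: reduce to the separate mass and phase infima, verify the sign hypothesis $\sup\geq 0\geq\inf$ (which the paper leaves as "not hard to see" and you justify via the asymptotics $A_n,B_n\to 0$), apply Lemma~\ref{lemma:supinf}, and identify the mass optimum with $\tfrac{1}{2}d_\bZ$ through the cross-ratio splitting $A_{s_2}-A_{s_1}$. The only difference is that you spell out the skyscraper bookkeeping and the decoupling of the $\bC$-action slightly more explicitly than the paper does.
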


\begin{proof}
It is not hard to see that
$$
\sup_{n\in\bZ}\left\{\log\frac{|\tau_1-n|}{|\tau_2-n|}\right\}\geq0\geq\inf_{m\in\bZ}\left\{\log\frac{|\tau_1-m|}{|\tau_2-m|}\right\}
$$
and
$$
\sup_{n\in\bZ}\left\{\log\frac{|\tau_1-n|}{|\tau_2-n|}\right\}-\inf_{m\in\bZ}\left\{\log\frac{|\tau_1-m|}{|\tau_2-m|}\right\}=\sup_{n,m\in\bZ}\left(\log\frac{|\tau_1-n||\tau_2-m|}{|\tau_2-n||\tau_1-m|}\right)=d_\bZ(\tau_1,\tau_2).
$$
Therefore, by Lemma~\ref{lemma:supinf}, the mass-distance is given by
\begin{equation}\label{eqn:mass-distance-geo-/C}\tag{3.b}
\bar{d}_\mass(\bar{\sigma}_{\tau_1},\bar{\sigma}_{\tau_2})=\frac{1}{2}d_\bZ(\tau_1,\tau_2).
\end{equation}
The phase-distance is given by
$$
\bar{d}_\phase(\bar{\sigma}_1,\bar{\sigma}_2)=\frac{1}{2\pi}\left(\sup_{n\in\bZ}\left\{\arg\left(\frac{\tau_1-n}{\tau_2-n}\right)\right\}-\inf_{m\in\bZ}\left\{\arg\left(\frac{\tau_1-m}{\tau_2-m}\right)\right\}\right).
$$
This follows again from Lemma~\ref{lemma:supinf}, and the simple observation that
$$
\sup_{n\in\bZ}\left\{\arg\left(\frac{\tau_1-n}{\tau_2-n}\right)\right\}\geq0\geq\inf_{m\in\bZ}\left\{\arg\left(\frac{\tau_1-m}{\tau_2-m}\right)\right\}.
$$
\end{proof}

\begin{rmk}
Note that the phase-distance satisfies the uniform bound
$$
\bar{d}_\phase(\bar{\sigma}_1,\bar{\sigma}_2)=\frac{1}{2\pi}\left(\sup_{n\in\bZ}\left\{\arg\left(\frac{\tau_1-n}{\tau_2-n}\right)\right\}-\inf_{m\in\bZ}\left\{\arg\left(\frac{\tau_1-m}{\tau_2-m}\right)\right\}\right)\leq\frac{1}{2\pi}\left(\pi-(-\pi)\right)=1.
$$
Therefore, one has
$$
\bar{d}(\bar{\sigma}_{\tau_1},\bar{\sigma}_{\tau_2})=\frac{1}{2}d_\bZ(\tau_1,\tau_2) \quad \text{ whenever } d_\bZ(\tau_1,\tau_2)\geq2.
$$
\end{rmk}

\begin{rmk}
For a smooth projective curve $X$ of genus $g(X)\geq1$, the space of stability conditions $\Stab(\Db(X))\cong\bH\times\bC$ consists of only geometric stability conditions \cite{Macri}*{Theorem~2.7}.
In this case, the quotient metric $\bar{d}$ on $\Stab(\Db(X))/\bC\cong\bH$ is precisely half the hyperbolic metric \cite{Woolf}*{Proposition~4.1}. 
The main reason we obtain a different metric in the case of $\bP^1$ is that the phases of semistable objects are dense in $\bR$ for $g(X)\geq1$, while for $\bP^1$, this density fails.
\end{rmk}

\subsection{Distance between geometric and algebraic stability conditions.}
\label{subsec:geometry-algebraic-distance}
To compute the distance between a geometric stability condition and an algebraic stability condition, we first recall the following useful fact.

\begin{lemma}[\cite{Okada}*{Lemma~3.1}]
In $\Db(\bP^1)$, there are exact triangles:
\begin{center}
\begin{tabular}{ r c l l }
 $\cO(k+1)^{\oplus(n-k)}\rightarrow$ & $\cO(n)$ & $\rightarrow\cO(k)^{\oplus(n-k-1)}[1]\xrightarrow{+1}$ & if $n> k+1$  \\ 
 $\cO(k+1)^{\oplus(k-n)}[-1]\rightarrow$ & $\cO(n)$ & $\rightarrow\cO(k)^{\oplus(k-n+1)}\xrightarrow{+1}$ & if $n<k$ \\  
 $\cO(k+1)\rightarrow$ & $\cO_x$ & $\rightarrow\cO(k)[1]\xrightarrow{+1}$
\end{tabular}
\end{center}
Moreover, these exact triangles give the Harder--Narasimhan filtrations of line bundles $\cO(n)$ (for $n\neq k,k+1$) and skyscraper sheaves $\cO_x$ with respect to algebraic stability conditions in $X_k\backslash\Stab^\Geo(\Db(\bP^1))$.
\end{lemma}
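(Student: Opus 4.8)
The plan is to prove the two assertions separately: first the existence of the three families of exact triangles as statements in $\Db(\bP^1)$, and then their interpretation as Harder--Narasimhan filtrations in the algebraic chambers. The triangles are facts about the derived category alone, with no reference to a stability condition; only their reading as filtrations involves the condition $\beta>1$.

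\emph{Existence of the triangles.} For a skyscraper sheaf, I would tensor the structure sequence $0\to\cO(-1)\to\cO\to\cO_x\to0$ of a point by $\cO(k+1)$ to obtain $0\to\cO(k)\to\cO(k+1)\to\cO_x\to0$, whose rotation is the triangle $\cO(k+1)\to\cO_x\to\cO(k)[1]\xrightarrow{+1}$. For a line bundle $\cO(n)$ with $n>k+1$, consider the evaluation map $\cO(k+1)\otimes H^0(\cO(n-k-1))\to\cO(n)$. Since $n-k-1\geq1$, the sheaf $\cO(n-k-1)$ is globally generated, so this map is surjective; its kernel $K$ is a vector bundle of rank $n-k-1$ and degree $k(n-k-1)$. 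By Grothendieck's splitting theorem $K$ is a sum of line bundles, and a short cohomology computation (the map $\Hom(\cO(k+1),\cO(k+1)^{\oplus(n-k)})\to\Hom(\cO(k+1),\cO(n))$ is an isomorphism by construction, so $\Hom(\cO(k+1),K)=0$, forcing every summand of $K$ to have degree $\leq k$) together with the degree count forces $K\cong\cO(k)^{\oplus(n-k-1)}$. This yields the exact sequence
$$
0\to\cO(k)^{\oplus(n-k-1)}\to\cO(k+1)^{\oplus(n-k)}\to\cO(n)\to0
$$
and hence the first triangle. The case $n<k$ then follows by applying the exact duality $\mathcal{H}om(-,\cO)$ to this sequence with parameters suitably shifted (so that the dualized bundles become $\cO(k)$ and $\cO(k+1)$), producing $0\to\cO(n)\to\cO(k)^{\oplus(k-n+1)}\to\cO(k+1)^{\oplus(k-n)}\to0$ and the stated triangle. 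In every case I would confirm the multiplicities by comparing ranks and degrees in $K_0(\Db(\bP^1))$: the first two families return $\rank=1,\ \deg=n$, matching $\cO(n)$, and the third returns the class of $\cO_x$.

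\emph{The Harder--Narasimhan property.} Fix an algebraic stability condition $\sigma_{\alpha,\beta,x,y}\in X_k\backslash\Stab^\Geo(\Db(\bP^1))$, so that $\beta>1$ (recall from Notation~\ref{notn:algebraic} that $0<\beta<1$ characterizes the geometric locus). By the defining property of $X_k$, both $\cO(k)$ and $\cO(k+1)$ are $\sigma$-stable, with phases $\phi(\cO(k))=y$ and $\phi(\cO(k+1))=y+\beta$. Each of the three triangles exhibits a two-step filtration whose factors are direct sums of (shifts of) $\cO(k)$ and $\cO(k+1)$; being direct sums of a single stable object, these factors are semistable of the indicated phases. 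It remains only to check that the subobject has strictly larger phase than the quotient. For $n>k+1$ and for $\cO_x$ the two factors have phases $y+\beta$ and $y+1$; for $n<k$ they have phases $y+\beta-1$ and $y$. In all three cases the required strict inequality is exactly $\beta>1$, which holds precisely because $\sigma$ is algebraic. Since a distinguished triangle with semistable factors of strictly decreasing phase is the Harder--Narasimhan filtration by uniqueness of such filtrations, this finishes the argument.

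\emph{Main obstacle.} The only genuinely nontrivial input is the construction of the resolution of $\cO(n)$ by $\cO(k)$ and $\cO(k+1)$; pinning down the kernel $K$ as $\cO(k)^{\oplus(n-k-1)}$ is the crux, while everything involving phases is bookkeeping. A cleaner conceptual packaging that simultaneously produces all three families and reads off the multiplicities at once is the Beilinson resolution of the diagonal on $\bP^1\times\bP^1$, namely $0\to\cO(-1)\boxtimes\cO(-1)\to\cO\boxtimes\cO\to\cO_\Delta\to0$: convolving an object $F$ against this kernel and twisting by $\cO(k+1)$ expresses $F$ as a two-term complex built from $\cO(k)$ and $\cO(k+1)$, with the multiplicities computed from $\Hom^\bullet(\cO(k+i),F)$, which for $F=\cO(n)$ and $F=\cO_x$ are immediate from the cohomology of line bundles on $\bP^1$. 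I would present the elementary construction above as the primary route and invoke Beilinson only if a uniform treatment is preferred.
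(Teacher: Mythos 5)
The paper offers no proof of this lemma at all: it is quoted verbatim from Okada (\cite{Okada}*{Lemma~3.1}) and used as a black box. Your argument is therefore a self-contained substitute rather than a variant of anything in the text, and it is correct. The construction of the resolution $0\to\cO(k)^{\oplus(n-k-1)}\to\cO(k+1)^{\oplus(n-k)}\to\cO(n)\to0$ via the evaluation map is sound: surjectivity follows from global generation of $\cO(n-k-1)$, the rank/degree count for the kernel $K$ is right ($\deg K=(k+1)(n-k)-n=k(n-k-1)$), and the vanishing $\Hom(\cO(k+1),K)=0$ does follow because the induced map $\bC^{n-k}=\Hom(\cO(k+1),\cO(k+1)^{\oplus(n-k)})\to\Hom(\cO(k+1),\cO(n))=H^0(\cO(n-k-1))$ is the tautological isomorphism; together with Grothendieck splitting this pins down $K\cong\cO(k)^{\oplus(n-k-1)}$. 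The dualization for $n<k$ and the twisted ideal-sheaf sequence for $\cO_x$ are likewise correct, and the phase bookkeeping matches Table~\ref{table:algebraic} exactly, with $\beta>1$ being precisely the strict-inequality condition needed for a two-step HN filtration. One small point of convention: the paper's $X_k\backslash\Stab^\Geo(\Db(\bP^1))$ corresponds to $\beta\geq1$, not $\beta>1$; at $\beta=1$ the two phases in each triangle coincide, the objects $\cO(n)$ and $\cO_x$ become semistable, and the triangles no longer give a nontrivial HN filtration (though they still compute the masses additively, which is what the paper actually uses in Remark~\ref{rmk:mass-distance-extend-closure}). You might add a sentence covering that boundary case, but this does not affect the substance of your proof.
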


For algebraic stability conditions in $X_k\backslash\Stab^\Geo(\Db(\bP^1))\cong\{(\alpha,\beta,x,y):\beta\geq1\}$ (see Notation~\ref{notn:algebraic}), the masses and phases of line bundles and skyscraper sheaves are summarized in Table~\ref{table:algebraic}.

\begin{table}[ht]
\centering
\begin{tabular}{ c|c|c|c } 
(for $\beta\geq1$) & $m_{\alpha,\beta,x,y}$ & $\phi^+_{\alpha,\beta,x,y}$ & $\phi^-_{\alpha,\beta,x,y}$ \\
\hline\hline
$\cO(n)$ ($n<k$) & $((k-n)e^{\alpha}+(k-n+1))e^x$ & $\beta+y-1$ & $y$ \\
\hline
$\cO(k)$ & $e^x$ & $y$ & $y$ \\
\hline
$\cO(k+1)$ & $e^{\alpha+x}$  & $\beta+y$ & $\beta+y$ \\
\hline
$\cO(n)$ ($n>k+1$) & $((n-k)e^{\alpha}+(n-k-1))e^x$ & $\beta+y$ & $y+1$ \\
\hline
$\cO_x$ & $(e^{\alpha}+1)e^x$ & $\beta+y$ & $y+1$
\end{tabular}
\caption{Masses and phases of objects with respect to an algebraic stability condition.}
\label{table:algebraic}
\end{table}

\begin{prop}
\label{prop:mass-projection}
Let $\sigma_1$ and $\sigma_2$ be two stability conditions on $\Db(\bP^1)$. Then
$$
d_\mass(\sigma_1,\sigma_2)=d_\mass(p(\sigma_1),p(\sigma_2)) \quad \text{ and } \quad \bar{d}_\mass(\bar{\sigma}_1,\bar{\sigma}_2)=\bar{d}_\mass(\bar{p}(\bar{\sigma}_1),\bar{p}(\bar{\sigma}_2)).
$$
Here, $p$ and $\bar{p}$ are the projections contracting algebraic stability conditions to the boundary of the geometric stability space (see Definition~\ref{defn:projection-to-boundary}).

Then, the mass-distances can be computed using formulae~(\ref{eqn:mass-distance-geom}) and (\ref{eqn:mass-distance-geo-/C}), as these extend to the closure of the geometric stability conditions (see Remark~\ref{rmk:mass-distance-extend-closure}).
\end{prop}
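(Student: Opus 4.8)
The plan is to reduce both identities to a single pointwise statement: for every object $E\in\Db(\bP^1)$ and every $\sigma\in\Stab(\Db(\bP^1))$, I will show that $m_\sigma(E)=m_{p(\sigma)}(E)$. Granting this, the first identity is immediate, since $d_\mass(\sigma_1,\sigma_2)=\sup_{E}\left|\log\frac{m_{\sigma_1}(E)}{m_{\sigma_2}(E)}\right|$ depends on the pair only through the individual masses, each of which is fixed by $p$. The quotient identity will follow the same way, since $\bar d_\mass$ is built from the very same mass ratios by an additional infimum over a mass-shift $x\in\bR$ (Definition~\ref{defn:bridgeland-dist-Cquotient}); applying the pointwise claim to chosen representatives of $\bar\sigma_1,\bar\sigma_2$ leaves these ratios unchanged, and hence the infimum as well.

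To prove the pointwise claim I would argue as follows. When $\sigma$ is geometric, $p(\sigma)=\sigma$ and there is nothing to do. When $\sigma=\sigma_{\alpha,\beta,x,y}\in X_k\backslash\Stab^\Geo(\Db(\bP^1))$ (so $\beta\geq1$), the projection keeps $(\alpha,x,y)$ fixed and merely resets $\beta$ to $1$, namely $p(\sigma)=\sigma_{\alpha,1,x,y}$. By the earlier lemma it suffices to compare masses of line bundles and skyscraper sheaves, and the essential observation is that every entry of the mass column $m_{\alpha,\beta,x,y}$ in Table~\ref{table:algebraic} is visibly independent of $\beta$ on the whole regime $\beta\geq1$. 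Hence $m_{\sigma_{\alpha,\beta,x,y}}(E)=m_{\sigma_{\alpha,1,x,y}}(E)$ for every such $E$, which is exactly the claim.

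The conceptual reason for this $\beta$-independence, which I would record to make the argument transparent, is that by Okada's lemma the Harder--Narasimhan factors of each $\cO(n)$ (for $n\neq k,k+1$) and of $\cO_x$ are direct sums of shifts of the two stable objects $\cO(k)$ and $\cO(k+1)$, whose masses $e^x$ and $e^{\alpha+x}$ do not involve $\beta$; the total mass, being the sum of the masses of the HN factors, cannot depend on $\beta$. The one place I expect to need genuine care --- and the main (mild) obstacle --- is the wall $\beta=1$, where $\sigma_{\alpha,1,x,y}$ sits on the boundary $\partial\overline{\Stab^\Geo(\Db(\bP^1))}$ and the HN filtrations degenerate into a single semistable object. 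There I would verify directly from the central charge that $m(\cO_x)=|Z(\cO_x)|=(e^\alpha+1)e^x$ agrees with the $\beta>1$ value, so that $p(\sigma)$ really carries the boundary masses. This is the same continuity that lets formula~(\ref{eqn:mass-distance-geom}) extend to the closure (Remark~\ref{rmk:mass-distance-extend-closure}), and it is what permits the final computation of $d_\mass$ and $\bar d_\mass$ via~(\ref{eqn:mass-distance-geom}) and~(\ref{eqn:mass-distance-geo-/C}).
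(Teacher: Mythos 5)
Your proof is correct and follows essentially the same route as the paper, whose entire argument is the observation that the mass column of Table~\ref{table:algebraic} is independent of $\beta$ for $\beta\geq 1$; you simply make explicit the reduction to the pointwise identity $m_\sigma(E)=m_{p(\sigma)}(E)$, the conceptual reason via Okada's exact triangles, and the continuity check at the wall $\beta=1$.
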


\begin{proof}
The result follows from the observation that the masses $m_{\alpha,\beta,x,y}$ of line bundles and skyscraper sheaves are independent of $\beta$ for algebraic stability conditions, i.e.~when $\beta\geq1$.
\end{proof}

Let $\sigma_1$ be a geometric stability condition and $\sigma_2$ an algebraic stability condition. There exists a unique $k\in\bZ$ such that both $\sigma_1$ and $\sigma_2$ lie in $X_k$. 
We now compute the distance between them using their coordinates from Notation~\ref{notn:algebraic}:
$$
(\alpha_1,\beta_1,x_1,y_1),(\alpha_2,\beta_2,x_2,y_2)\in\bH\times\bR^2.
$$
The mass-distance can be computed directly via Proposition~\ref{prop:mass-projection}. For the phase-distance, we additionally provide the masses and phases of line bundles and skyscraper sheaves with respect to geometric stability conditions (i.e.~for $0<\beta<1$) in the following table.

\begin{table}[ht]
\centering
\begin{tabular}{ c|c|c } 
(for $0<\beta<1$)& $m_{\alpha,\beta,x,y}$ & $\phi^\pm_{\alpha,\beta,x,y}$  \\
\hline\hline
$\cO(n)$ ($n<k$) & $|(k-n)e^{\alpha+i\pi\beta}-(k-n+1)|e^x$ & $\beta+y-1<\phi(\cO(n))<y$ \\
\hline
$\cO(k)$ & $e^x$ & $y$\\
\hline
$\cO(k+1)$ & $e^{\alpha+x}$ & $\beta+y$\\
\hline
$\cO(n)$ ($n>k+1$) & $|(n-k)e^{\alpha+i\pi\beta}-(n-k-1)|e^x$ & $\beta+y<\phi(\cO(n))<y+1$ \\
\hline
$\cO_x$ & $|e^{\alpha+i\pi\beta}-1|e^x$ & $\beta+y<\phi(\cO_x)<y+1$
\end{tabular}
\caption{Masses and phases of objects with respect to a geometric stability condition.}
\label{table:geometric}
\end{table}

\begin{lemma}
\label{lemma:geometric-algebraic-distance}
Let $\sigma_{\alpha_1,\beta_1,x_1,y_1}$ be a geometric stability condition and $\sigma_{\alpha_2,\beta_2,x_2,y_2}$ an algebraic stability condition in $X_k$ (therefore, $0<\beta_1<1\leq\beta_2$). Then
\begin{align*}
d_\mass\left(\sigma_{\alpha_1,\beta_1,x_1,y_1},\sigma_{\alpha_2,\beta_2,x_2,y_2}\right) & = d_\mass\left(\sigma_{\tilde{\tau}_1,\tilde{x}_1,\tilde{y}_1},\sigma_{\tilde{\tau}_2,\tilde{x}_2,\tilde{y}_2}
\right),\\
d_\phase\left(\sigma_{\alpha_1,\beta_1,x_1,y_1},\sigma_{\alpha_2,\beta_2,x_2,y_2}\right) & =\max\left\{|(\beta_1-\beta_2)+(y_1-y_2)|,|y_1-y_2|\right\},
\end{align*}
where
\begin{center}
\begin{tabular}{ ll } 
$\tilde{\tau}_1=k+\frac{1}{1-e^{\alpha_1+i\pi\beta_1}}$, & $\tilde{\tau}_2=k+\frac{1}{1+e^{\alpha_2}}$, \\
$\tilde{x}_1=x_1 + \log\left|1-e^{\alpha_1+i\pi\beta_1}\right|$, & $\tilde{x}_2=x_2 + \log\left(1+e^{\alpha_2}\right)$, \\
$\tilde{y}_1=y_1 + \frac{1}{\pi}\arg\left(1-e^{\alpha_1+i\pi\beta_1}\right)$, & $\tilde{y}_2=y_2$
\end{tabular}
\end{center}
are coordinates in $(\tau,x,y)\in(\bH\cup(\bR\backslash\bZ))\times\bR^2\cong\overline{\Stab^\Geo(\Db(\bP^1))}$.
\end{lemma}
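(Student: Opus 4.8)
The plan is to establish the mass-distance and phase-distance formulas separately, since they draw on entirely different inputs. Throughout, write $\sigma_1=\sigma_{\alpha_1,\beta_1,x_1,y_1}$ (geometric, $0<\beta_1<1$) and $\sigma_2=\sigma_{\alpha_2,\beta_2,x_2,y_2}$ (algebraic, $\beta_2\geq1$).

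\emph{Mass-distance.} I would invoke Proposition~\ref{prop:mass-projection}, which says that $d_\mass$ is invariant under the boundary projection $p$. Since $\sigma_1$ is geometric, $p$ fixes it, and Lemma~\ref{lemma:change-of-var} expresses it in geometric coordinates as $\sigma_{\tilde\tau_1,\tilde x_1,\tilde y_1}$. Since $\sigma_2$ is algebraic, $p$ sends it to the boundary point $\sigma_{\alpha_2,1,x_2,y_2}$, whose geometric coordinates $(\tilde\tau_2,\tilde x_2,\tilde y_2)$ are precisely the $\beta=1$ transformation recorded in Remark~\ref{rmk:boundary-parametrization}. Composing these two identifications yields $d_\mass(\sigma_1,\sigma_2)=d_\mass(\sigma_{\tilde\tau_1,\tilde x_1,\tilde y_1},\sigma_{\tilde\tau_2,\tilde x_2,\tilde y_2})$ with no further computation.

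\emph{Phase-distance.} I would compute directly from the phase data: the geometric $\sigma_1$ makes every object stable, so $\phi^+=\phi^-$ is read from Table~\ref{table:geometric}, while the $\phi^\pm$ of the algebraic $\sigma_2$ are the explicit entries in Table~\ref{table:algebraic}. Setting $a=y_1-y_2$ and $b=(\beta_1-\beta_2)+(y_1-y_2)$, the objects $\cO(k)$ and $\cO(k+1)$ have phases $y_i$ and $\beta_i+y_i$ under $\sigma_i$, so they contribute exactly $|a|$ and $|b|$ to the supremum. This realizes the claimed value $\max\{|a|,|b|\}$, and it remains to show that no other object exceeds it.

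The main work, and the step I expect to be the real obstacle, is bounding the contributions of the remaining objects $\cO(n)$ for $n<k$ and $n>k+1$, together with $\cO_x$. The hypothesis $0<\beta_1<1\leq\beta_2$ forces $b<a$, and I would combine this with the elementary observation that every $t\in[b,a]$ satisfies $|t|\leq\max\{|a|,|b|\}$. For each such object, Table~\ref{table:geometric} pins the phase under $\sigma_1$ to an explicit open interval (for instance $\phi_{\sigma_1}(\cO(n))\in(\beta_1+y_1-1,y_1)$ when $n<k$), while Table~\ref{table:algebraic} gives $\phi^+_{\sigma_2}$ and $\phi^-_{\sigma_2}$ as explicit values; subtracting and simplifying places each of the two phase differences inside a subinterval of $[b,a]$, where the nontrivial endpoint inclusion reduces in every case to $\beta_2\geq1$. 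This interval bookkeeping is routine but requires care to confirm that the constraints on $\beta_1$ and $\beta_2$ are deployed correctly, and that the supremum is genuinely attained at $\cO(k)$ and $\cO(k+1)$ rather than merely approached.
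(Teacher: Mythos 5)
Your proposal is correct and follows essentially the same route as the paper: the mass-distance equality via Proposition~\ref{prop:mass-projection} together with Lemma~\ref{lemma:change-of-var} and Remark~\ref{rmk:boundary-parametrization}, and the phase-distance by direct comparison of Tables~\ref{table:algebraic} and \ref{table:geometric}. The interval bookkeeping you outline (each remaining phase difference landing in $[b,a]$ precisely because $\beta_2\geq1$) checks out and is exactly the ``explicit evaluation'' the paper leaves implicit.
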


\begin{proof}
The mass-distance equality follows from Proposition~\ref{prop:mass-projection}, combined with the coordinate transformation between the $(\alpha,\beta,x,y)$ and $(\tau,x,y)$ systems given in Lemma~\ref{lemma:change-of-var} (along with Remark~\ref{rmk:boundary-parametrization} for the boundary case). 
For the phase-distance, we compute it explicitly by evaluating the phase differences using Tables~\ref{table:algebraic} and \ref{table:geometric}.
\end{proof}

\begin{prop}
\label{prop:geometric-algebraic-distance/C}
Let $\bar{\sigma}_{\alpha_1,\beta_1}$ and $\bar{\sigma}_{\alpha_2,\beta_2}$ be two elements of $X_k/\bC\cong\bR\times\bR_{>0}$.
Suppose $0<\beta_1<1\leq\beta_2$, i.e.~$\bar{\sigma}_{\alpha_1,\beta_1}$ is geometric and $\bar{\sigma}_{\alpha_2,\beta_2}$ is algebraic. Then their distance is given by
$$
\bar{d}(\bar{\sigma}_{\alpha_1,\beta_1},\bar{\sigma}_{\alpha_2,\beta_2})=\frac{1}{2}\max\left\{d_\bZ\left(k+\frac{1}{1-e^{\alpha_1+i\pi\beta_1}},k+\frac{1}{1+e^{\alpha_2}}\right),\beta_2-\beta_1\right\}.
$$
\end{prop}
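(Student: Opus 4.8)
The plan is to treat the mass- and phase-components separately, exploiting the decomposition $\bar{d}=\max\{\bar{d}_\mass,\bar{d}_\phase\}$ recorded in Definition~\ref{defn:bridgeland-dist-Cquotient}. This splitting is legitimate because the $x$-component of the $\bC$-action only rescales masses while the $y$-component only shifts phases, so the two infima may be taken independently; I would then show that the mass-part produces the $\tfrac12 d_\bZ$ entry and the phase-part produces the $\tfrac12(\beta_2-\beta_1)$ entry, whence the maximum of the two is the asserted formula.

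For the mass-distance I would invoke Proposition~\ref{prop:mass-projection}, which says that the projection $\bar{p}$ to the boundary of the geometric chamber preserves $\bar{d}_\mass$. Since $\bar{\sigma}_{\alpha_1,\beta_1}$ is geometric it is fixed by $\bar{p}$ and corresponds, via Lemma~\ref{lemma:change-of-var}, to the geometric coordinate $\tau_1=k+\frac{1}{1-e^{\alpha_1+i\pi\beta_1}}\in\bH$; while $\bar{\sigma}_{\alpha_2,\beta_2}$ is algebraic and $\bar{p}$ sends it to the boundary point $\bar{\sigma}_{\alpha_2,1}$, which by Remark~\ref{rmk:boundary-parametrization} has coordinate $\tau_2=k+\frac{1}{1+e^{\alpha_2}}\in(k,k+1)\subseteq\bR\backslash\bZ$. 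Applying the quotient mass-distance formula~\eqref{eqn:mass-distance-geo-/C} — which extends to the closure by Remark~\ref{rmk:mass-distance-extend-closure}, and for which $\tau_2$ is a legitimate boundary argument of $d_\bZ$ by Definition~\ref{defn:H-metric-Z} — yields $\bar{d}_\mass=\tfrac12 d_\bZ(\tau_1,\tau_2)$, exactly the first entry of the claimed maximum.

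For the phase-distance I would begin from Lemma~\ref{lemma:geometric-algebraic-distance}, whose proof evaluates all phase differences using Tables~\ref{table:algebraic} and~\ref{table:geometric}. The quotient phase-distance is $\bar{d}_\phase=\inf_{y\in\bR}\sup_{v\in V}|v-y|$, where $V$ is the set of all phase differences $\phi^{\pm}_{\sigma_1}(E)-\phi^{\pm}_{\sigma_2}(E)$ taken over objects $E$ and both signs. The crucial input is that $V$ is contained in the interval with endpoints $y_1-y_2$ (attained by $\cO(k)$) and $(y_1-y_2)+(\beta_1-\beta_2)$ (attained by $\cO(k+1)$), both of which are genuinely achieved; note $\beta_1-\beta_2<0$ since $0<\beta_1<1\leq\beta_2$. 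Granting this, the Chebyshev-center computation $\inf_y\sup_{v\in V}|v-y|=\tfrac12(\sup V-\inf V)$, with the minimum realized at $y=\tfrac12(\sup V+\inf V)$ (the anchor-free analogue of Lemma~\ref{lemma:supinf}), gives $\bar{d}_\phase=\tfrac12\big((y_1-y_2)-((y_1-y_2)+(\beta_1-\beta_2))\big)=\tfrac12(\beta_2-\beta_1)$, the second entry of the claimed maximum.

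The main obstacle is the phase computation — specifically, pinning down the exact \emph{signed} extrema $\sup V$ and $\inf V$, rather than merely the largest modulus $\max\{|y_1-y_2|,|(y_1-y_2)+(\beta_1-\beta_2)|\}$ recorded in the non-quotient Lemma~\ref{lemma:geometric-algebraic-distance}. This requires the object-by-object check that every phase difference arising from $\cO(n)$ with $n\neq k,k+1$ and from $\cO_x$ lies between the two extreme values produced by $\cO(k)$ and $\cO(k+1)$; once the full range is identified as $[(y_1-y_2)+(\beta_1-\beta_2),\,y_1-y_2]$, combining the two components via the maximum completes the proof.
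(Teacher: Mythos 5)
Your proposal is correct and follows essentially the same route as the paper: the mass component via Proposition~\ref{prop:mass-projection} together with formula~(\ref{eqn:mass-distance-geo-/C}) on the closure, and the phase component by extracting the signed phase differences from Tables~\ref{table:algebraic} and~\ref{table:geometric} and applying the Chebyshev-center computation, which is exactly Lemma~\ref{lemma:supinf} (your ``anchor-free analogue'' coincides with it once the value $0$ contributed by $\cO(k)$ is included among the $A_\alpha$). Your explicit check that every phase difference from $\cO(n)$, $n\neq k,k+1$, and $\cO_x$ lies in the interval $[(y_1-y_2)+(\beta_1-\beta_2),\,y_1-y_2]$ is a worthwhile elaboration of a step the paper leaves implicit in its appeal to the ``pre-quotient computation'' of Lemma~\ref{lemma:geometric-algebraic-distance}, since the statement of that lemma records only the two extreme moduli rather than the full range of signed differences needed for the quotient infimum.
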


\begin{proof}
The mass-distance is again determined by Proposition~\ref{prop:mass-projection}. For the phase-distance, we combine the pre-quotient computation from Lemma~\ref{lemma:geometric-algebraic-distance} with Lemma~\ref{lemma:supinf}.
\end{proof}

\subsection{Distance between two algebraic stability conditions.}
\label{subsec:3-4-dist-alg}
We compute the distance between algebraic stability conditions in this subsection. Although not required for the proof of our main theorem, these results are included for completeness.

\begin{prop}
\label{prop:alg-stab-dist}
Let $\sigma_1=\sigma_{\alpha_1,\beta_1,x_1,y_1}\in X_k$ and $\sigma_2=\sigma_{\alpha_2,\beta_2,x_2,y_2}\in X_\ell$ be two algebraic stability conditions.
\begin{enumerate}[label=(\roman*)]
    \item\label{item:alg-stab-dist-same-chamber}
    Suppose $k=\ell$, i.e.~$\sigma_1$ and $\sigma_2$ are in the same algebraic chamber. Then
\begin{align*}
d_\mass\left(\sigma_{\alpha_1,\beta_1,x_1,y_1},\sigma_{\alpha_2,\beta_2,x_2,y_2}\right) & =\max\left\{|(\alpha_1-\alpha_2)+(x_1-x_2)|,|x_1-x_2|\right\} \\
d_\phase\left(\sigma_{\alpha_1,\beta_1,x_1,y_1},\sigma_{\alpha_2,\beta_2,x_2,y_2}\right) & =\max\left\{|(\beta_1-\beta_2)+(y_1-y_2)|,|y_1-y_2|\right\} \\
\bar{d}_\mass(\bar{\sigma}_{\alpha_1,\beta_1},\bar{\sigma}_{\alpha_2,\beta_2}) & = \frac{1}{2}|\alpha_1-\alpha_2| \\
\bar{d}_\phase(\bar{\sigma}_{\alpha_1,\beta_1},\bar{\sigma}_{\alpha_2,\beta_2}) & = \frac{1}{2}|\beta_1-\beta_2|
\end{align*}
    \item Suppose $k\neq\ell$, say $k<\ell$ without loss of generality. Then
\begin{align*}
d_\mass\left(\sigma_{\alpha_1,\beta_1,x_1,y_1},\sigma_{\alpha_2,\beta_2,x_2,y_2}\right) & =\max\{
\left|\log\left((\ell-k)(1+e^{\alpha_1})-1\right)+(x_1-x_2)\right|, \\
& \quad\qquad\ \ \left|\log\left((\ell-k+1)(1+e^{\alpha_1})-1\right)-\alpha_2+(x_1-x_2)\right|,\\
& \quad\qquad\ \ \left|-\log\left((\ell-k)(1+e^{\alpha_2})+1\right)+(x_1-x_2)\right|,\\
& \quad\qquad\ \ \left|\alpha_1-\log\left((\ell-k-1)(1+e^{\alpha_2})+1\right)+(x_1-x_2)\right|\}\\
d_\phase\left(\sigma_{\alpha_1,\beta_1,x_1,y_1},\sigma_{\alpha_2,\beta_2,x_2,y_2}\right) & =\max\left\{|\beta_1+(y_1-y_2)|,|(1-\beta_2)+(y_1-y_2))|\right\} \\
\bar{d}_\mass(\bar{\sigma}_{\alpha_1,\beta_1},\bar{\sigma}_{\alpha_2,\beta_2}) & = \frac{1}{2}d_\bZ\left(k+\frac{1}{1+e^{\alpha_1}},\ell+\frac{1}{1+e^{\alpha_2}}\right)\\
\bar{d}_\phase(\bar{\sigma}_{\alpha_1,\beta_1},\bar{\sigma}_{\alpha_2,\beta_2}) & = \frac{1}{2}(\beta_1+\beta_2-1)
\end{align*}
\end{enumerate}
\end{prop}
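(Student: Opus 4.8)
The plan is to read every relevant mass and phase off Table~\ref{table:algebraic}. Since both $\sigma_1$ and $\sigma_2$ are algebraic, this table applies to each of them (with $k$, respectively $\ell$), and by the reduction lemma opening Section~\ref{subsec:3-2-dist-geom} the suprema defining $d$ and $\bar d$ need only range over the line bundles $\cO(n)$ and the skyscrapers $\cO_x$. Writing $A_E\coloneqq\log\bigl(m_{\sigma_1}(E)/m_{\sigma_2}(E)\bigr)$, I would compute $A_E$ and the phase differences $\phi^\pm_{\sigma_1}(E)-\phi^\pm_{\sigma_2}(E)$ object by object and then locate their extreme values: the two pre-quotient distances are $\sup_E|A_E|$ and the supremum of the phase differences, while the quotient distances come from the midpoint principle $\inf_\lambda\sup_\alpha|A_\alpha-\lambda|=\tfrac12(\sup_\alpha A_\alpha-\inf_\alpha A_\alpha)$ underlying Lemma~\ref{lemma:supinf}.

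For part~\ref{item:alg-stab-dist-same-chamber} ($k=\ell$) this is quick. Every mass $m_{\sigma_i}(\cO(n))e^{-x_i}$ has the shape $ae^{\alpha_i}+b$ with $a,b$ positive integers, so $A_{\cO(n)}=\log\frac{e^{\alpha_1}+t}{e^{\alpha_2}+t}+(x_1-x_2)$ with $t=b/a$; this is monotone in $t$ and interpolates between $\alpha_1-\alpha_2$ (at $t=0$, attained by $\cO(k+1)$) and $0$ (at $t=\infty$, attained by $\cO(k)$), so $A_E$ ranges over the interval between $x_1-x_2$ and $(\alpha_1-\alpha_2)+(x_1-x_2)$; this yields $d_\mass$. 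The phases in Table~\ref{table:algebraic} only take the values $y_i$ or $\beta_i+y_i$ (after the recorded shifts), so each phase difference equals $y_1-y_2$ or $(\beta_1-\beta_2)+(y_1-y_2)$, giving $d_\phase$. The midpoint principle then produces $\bar d_\mass=\tfrac12|\alpha_1-\alpha_2|$ and $\bar d_\phase=\tfrac12|\beta_1-\beta_2|$.

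Part~(ii) ($k<\ell$) is the substance. Using $je^\alpha+(j+1)=j(1+e^\alpha)+1$ and $(j+1)e^\alpha+j=(j+1)(1+e^\alpha)-1$, I would rewrite $A_{\cO(n)}$ as a logarithm of an explicit rational function of $n$ on each of the three ranges $n\le k$, $k+1\le n\le\ell$, and $n\ge\ell+1$. A short monotonicity check shows $A_{\cO(n)}$ is strictly monotone on each range, with common limit $A_{\cO_x}=\log\frac{1+e^{\alpha_1}}{1+e^{\alpha_2}}+(x_1-x_2)$ as $n\to\pm\infty$; evaluating at the four endpoints $n=\ell,\ell+1,k,k+1$ reproduces exactly the four terms of the asserted $d_\mass$. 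Monotonicity forces the skyscraper value to be squeezed strictly between the $n=k$ and the $n=\ell+1$ endpoint values, hence to be non-extremal, so $\sup_E A_E$ and $\inf_E A_E$ are attained among the four endpoint values. Since all four are themselves attained and the whole range of $A_E$ lies inside the interval they span, $\sup_E|A_E|$ equals the maximum of their absolute values, which is the stated formula.

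For the phase distance in part~(ii) I would tabulate the (six distinct) phase differences; after subtracting $y_1-y_2$ they are $\beta_1-\beta_2$, $0$, $1-\beta_2$, $\beta_1-\beta_2+1$, $\beta_1$, and $1$, all of which lie in $[1-\beta_2,\beta_1]$ because $\beta_1,\beta_2\ge1$, with the endpoints $\beta_1$ (from $\cO(\ell)$ or $\cO(k+1)$) and $1-\beta_2$ (from $\cO(\ell+1)$) attained; this gives $d_\phase$ and, via the midpoint principle, $\bar d_\phase=\tfrac12(\beta_1+\beta_2-1)$. The quotient $\bar d_\mass$ follows immediately from Proposition~\ref{prop:mass-projection}: the projection collapses $\bar\sigma_1,\bar\sigma_2$ to the boundary points $\tau_i$ of Remark~\ref{rmk:boundary-parametrization}, and (\ref{eqn:mass-distance-geo-/C}) evaluates $\bar d_\mass$ there as $\tfrac12 d_\bZ(\tau_1,\tau_2)$. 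The main obstacle is the case-(ii) mass analysis: one must verify the piecewise monotonicity and, crucially, that the skyscraper value (the two-sided limit of $A_{\cO(n)}$) stays strictly inside the endpoint interval, so that it contributes no fifth term to the supremum. The degenerate subcase $\ell=k+1$, where the middle range is empty and several of the distinguished objects coincide, must be checked separately, but there too the four mass terms and the two phase endpoints remain attained.
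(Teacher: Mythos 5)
Your proposal is correct and follows essentially the same route as the paper: the paper's (one-line) proof likewise reads the masses and phases off Table~\ref{table:algebraic}, reduces the mass computation via Proposition~\ref{prop:mass-projection}, and applies Lemma~\ref{lemma:supinf} for the quotient distances. You additionally spell out the piecewise monotonicity in $n$ that identifies $\cO(k),\cO(k+1),\cO(\ell),\cO(\ell+1)$ as the extremal objects and shows the skyscraper value is non-extremal — a verification the paper leaves implicit — and your handling of the degenerate subcase $\ell=k+1$ and of the six phase differences checks out.
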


\begin{proof}
The mass-distances are computed using Proposition~\ref{prop:mass-projection}, and the phase-distances follow from Table~\ref{table:algebraic}.
\end{proof}

Note that the case when $k=\ell$ was previously treated in \cite{Kikuta}*{Section~3.2}.

\subsection{Proof of Theorem~\ref{thm:main}.}
\label{subsec:proof-of-thm}
In this subsection, we prove Theorem~\ref{thm:main}, establishing that neither $(\Stab(\Db(\bP^1)),d)$ nor its quotient $(\Stab(\Db(\bP^1))/\bC,\bar{d})$ is a length space.

Before presenting the full proof, let us outline the key idea.
First, we choose a geometric stability condition $\sigma_1$ that is far away from the boundary $\partial\overline{\Stab^\Geo(\Db(\bP^1))}$, along with a boundary stability condition $\sigma_2\in\partial\overline{\Stab^\Geo(\Db(\bP^1))}$ such that:
\begin{itemize}
    \item $d(\sigma_1,\sigma_2)$ realizes the infimum distance from $\sigma_1$ to the boundary:
    $$
    d(\sigma_1,\sigma_2)=\inf\left\{d(\sigma_1,\tilde{\sigma}):\tilde{\sigma}\in\partial\overline{\Stab^\Geo(\Db(\bP^1))}\right\},
    $$
    \item the mass distance dominates the phase distance:  $d_\mass(\sigma_1,\sigma_2)\gg d_\phase(\sigma_1,\sigma_2)$.
\end{itemize}

Let $\sigma_2=\sigma_{\alpha,1,x,y}\in X_k$. 
We then consider a small deformation of it into the algebraic chamber, say $\sigma_3=\sigma_{\alpha,1+\epsilon,x,y}\in X_k$. Then:
\begin{itemize}
    \item $\sigma_3$ has a positive distance from the boundary:
        $$
        \inf\left\{d(\sigma_3,\tilde{\sigma}):\tilde{\sigma}\in\partial\overline{\Stab^\Geo(\Db(\bP^1))}\right\}\geq\frac{\epsilon}{2}>0,
        $$
    \item this deformation preserves the distance to $\sigma_1$:
    $d(\sigma_1,\sigma_2)=d(\sigma_1,\sigma_3)$: 
    the mass-distance remains identical by Proposition~\ref{prop:mass-projection}, and the phase-distance increases by at most $\epsilon$ (Lemma~\ref{lemma:geometric-algebraic-distance}), remaining dominated by the mass distance.
\end{itemize}

Any path $\gamma$ connecting $\sigma_1$ to $\sigma_3$ must cross the boundary $\partial\overline{\Stab^\Geo(\Db(\bP^1))}$. A simple length estimate gives
$$
L(\gamma)\geq d(\sigma_1,\sigma_3) + \frac{\epsilon}{2},
$$
which shows that the distance between $\sigma_1$ and $\sigma_3$ cannot be realized as the infimum of lengths of connecting paths.

\begin{proof}[Proof of Theorem~\ref{thm:main}]
Fix an explicit geometric stability condition
$$
\sigma_1=\sigma_{\left(\frac{1}{2}+10i,0,0\right)},
$$
where $\left(\frac{1}{2}+10i,0,0\right)\in\bH\times\bR^2$ are its coordinates following Notation~\ref{notn:geometric}. 
By Lemma~\ref{lemma:geometric-distance}, the infimum $\inf\left\{d(\sigma_1,\tilde{\sigma}):\tilde{\sigma}\in\partial\overline{\Stab^\Geo(\Db(\bP^1))}\right\}$ is achieved at
$$
\sigma_2=\sigma_{\left(\frac{1}{2},\frac{1}{2}\log\sqrt{401},0\right)}\in\partial\overline{\Stab^\Geo(\Db(\bP^1))}\cap X_0,
$$
where $\left(\frac{1}{2},\frac{1}{2}\log\sqrt{401},0\right)\in(0,1)\times\bR^2$ are its coordinates as defined in Notation~\ref{notn:geometric} and Remark~\ref{rmk:boundary-parametrization}.
A direct calculation yields:
\begin{align*}
    d(\sigma_1,\sigma_2) & = \inf\left\{d(\sigma_1,\tilde{\sigma}):\tilde{\sigma}\in\partial\overline{\Stab^\Geo(\Db(\bP^1))}\right\} = \frac{1}{2}\log\sqrt{401} \\
    d_\mass(\sigma_1,\sigma_2) & = \frac{1}{2}\log\sqrt{401} \\
    d_\phase(\sigma_1,\sigma_2) & < 1
\end{align*}

Next, we express $\sigma_2\in X_0$ using the algebraic coordinates from Notation~\ref{notn:algebraic}. By Remark~\ref{rmk:boundary-parametrization}, we have:
$$
\sigma_2=\sigma_{\left(0,1,\frac{1}{2}\log\sqrt{401}-\log2,0\right)}\in X_0.
$$
We then deform $\sigma_2$ slightly into the algebraic chamber by increasing its $\beta$-component by $0.1$, obtaining:
$$
\sigma_3=\sigma_{\left(0,1.1,\frac{1}{2}\log\sqrt{401}-\log2,0\right)}\in X_0.
$$
The distance between $\sigma_1$ and $\sigma_3$ satisfies:
\begin{itemize}
    \item \emph{Mass-distance}: By Proposition~\ref{prop:mass-projection},
    $d_\mass(\sigma_1,\sigma_3)=d_\mass(\sigma_1,\sigma_2)=\frac{1}{2}\log\sqrt{401}$.
    \item \emph{Phase-distance}:
    $d_\phase(\sigma_1,\sigma_3)\leq d_\phase(\sigma_1,\sigma_2)+d_\phase(\sigma_2,\sigma_3)<1.1<\frac{1}{2}\log\sqrt{401}$.
\end{itemize}
Therefore,
$$
d(\sigma_1,\sigma_3)=\max\left\{d_\mass(\sigma_1,\sigma_3),d_\phase(\sigma_1,\sigma_3)\right\}=d(\sigma_1,\sigma_2)=\frac{1}{2}\log\sqrt{401}.
$$
By Lemma~\ref{lemma:geometric-algebraic-distance}, $\sigma_3$ has a positive distance from the boundary:
\begin{align*}
    \inf\left\{d(\sigma_3,\tilde{\sigma}):\tilde{\sigma}\in\partial\overline{\Stab^\Geo(\Db(\bP^1))}\right\} & \geq
    \inf\left\{d_\phase(\sigma_3,\tilde{\sigma}):\tilde{\sigma}\in\partial\overline{\Stab^\Geo(\Db(\bP^1))}\right\} \\
    & \geq0.05.
\end{align*}

Since every path $\gamma$ connecting $\sigma_1$ to $\sigma_3$ must intersect the boundary $\partial\overline{\Stab^\Geo(\Db(\bP^1))}$ at some point $\sigma^*$, we obtain the following length estimate:
\begin{align*}
    L(\gamma) & \geq d(\sigma_1,\sigma^*) + d(\sigma^*, \sigma_3) \\
    & \geq d(\sigma_1,\sigma_2) + 0.05 = d(\sigma_1,\sigma_3) + 0.05.
\end{align*}
This proves that $\Stab(\Db(\bP^1))$ is not a length space, as the distance between $\sigma_1$ and $\sigma_3$ cannot be realized as the infimum of lengths of paths connecting them.

The proof that $\Stab(\Db(\bP^1))/\bC$ is not a length space follows analogously. Consider the geometric stability condition
$$
\bar{\sigma}_1=\bar{\sigma}_{\frac{1}{2}+10i}\in\Stab^\Geo(\Db(\bP^1))/\bC.
$$
The minimal distance to the boundary $\partial\overline{\Stab^\Geo(\Db(\bP^1))}/\bC$ is achieved by $\bar\sigma_2=\bar\sigma_{\frac{1}{2}}$, with the following explicit values:
\begin{align*}
    \bar{d}(\bar\sigma_1,\bar\sigma_2) & = \inf\left\{\bar{d}(\bar\sigma_1,\bar{\tilde{\sigma}}):\bar{\tilde{\sigma}}\in\partial\overline{\Stab^\Geo(\Db(\bP^1))}/\bC\right\} = \frac{1}{2}\log\sqrt{401} \\
    \bar{d}_\mass(\bar\sigma_1,\bar\sigma_2) & = \frac{1}{2}\log\sqrt{401} \\
    \bar{d}_\phase(\bar\sigma_1,\bar\sigma_2) & < 1
\end{align*}
Using the algebraic coordinates from Notation~\ref{notn:algebraic}, we have:
$$
\bar\sigma_2=\bar\sigma_{(0,1)}\in X_0.
$$
Consider a small deformation into the algebraic chamber:
$$
\bar\sigma_3=\bar\sigma_{(0,1.1)},
$$
obtained by incrementing the $\beta$-parameter.
The identical argument now shows that $\bar{d}(\bar{\sigma}_1, \bar{\sigma}_3)$ cannot be achieved as the infimum of path lengths between them. We conclude that $\Stab(\Db(\bP^1))/\bC$ fails to be a length space.
\end{proof}

\section{Two modified metrics}
In this section, we introduce and analyze two modified metrics on stability spaces, with particular emphasis on their properties for the case of projective line.

\subsection{Mass metric.}
\label{subsec:mass-metric}
Recall that the original distance function (see Notation~\ref{notn:distance-mass-phase}) is given by:
$$
d(\sigma_1,\sigma_2)=\max\left\{d_\mass(\sigma_1,\sigma_2),d_\phase(\sigma_1,\sigma_2)\right\}.
$$
The \emph{mass metric} arises as a limiting metric where the phase contribution goes to zero.
This type of degeneration appears also in \cite{FFHKL}*{Section~2.1} and \cite{BaeLee}*{Remark~10.13}.
The underlying space for the mass metric is defined as follows:

\begin{defn}
Define the space
$$
\Stab_\mass(\cD)\coloneqq\Stab(\cD)/\sim_m, \quad \text{ where } \sigma_1\sim_m\sigma_2 \text{ if } d_\mass(\sigma_1,\sigma_2)=0,
$$
It admits a natural generalized metric given by
$$
d_\mass(\sigma_1,\sigma_2) = \sup_{E\in D\backslash\{0\}}\left\{\left|\log\frac{m_{\sigma_1}(E)}{m_{\sigma_2}(E)}\right|\right\}.
$$
\end{defn}

\begin{rmk}
\label{rmk:R-action-on-Mass}
Recall that the free $\bC$-action on the stability space is given by
$$
\sigma=(Z,P)\mapsto\sigma.(x,y)\coloneqq\left(e^{x+i\pi y}\cdot Z, P'\right),  \text{ where }
(x,y)\in\bR^2\cong\bC \text{ and }
P'(\phi)=P(\phi-y).
$$
Observe that the $y$-component preserves masses and thus acts trivially on $\Stab_\mass(\mathcal{D})$,
while the $x$-component induces a free $\mathbb{R}$-action on $\Stab_\mass(\mathcal{D})$.
The quotient $\Stab_\mass(\mathcal{D})/\mathbb{R}$ inherits a natural metric:
$$
\bar{d}_\mass(\bar{\sigma}_1,\bar{\sigma}_2) =
    \inf_{x\in\bR}\left\{
    \sup_{E\in D\backslash\{0\}}\left\{\left|\log\frac{m_{\sigma_1}(E)}{m_{\sigma_2}(E)}-x\right|\right\}\right\}.
$$
\end{rmk}

\begin{eg}
Consider the space $\Stab_\mass(\Db(\bP^1))$. Proposition~\ref{prop:mass-projection} shows that the equivalence relation $\sim_m$ identifies algebraic stability conditions with boundary geometric ones, yielding
$\Stab_\mass(\Db(\bP^1))\cong\overline{\Stab^\Geo_\mass(\Db(\bP^1))}$. 
Furthermore, Proposition~\ref{prop:geometric-distance/C} gives an isometric identification of the quotient space:
$$
\left(\Stab_\mass(\Db(\mathbb{P}^1))/\mathbb{R},\bar{d}_\mass\right)\cong\left(\bH\cup(\bR\backslash\bZ),\frac{1}{2}d_\bZ\right),
$$
where $d_\bZ$ is the metric from Definition~\ref{defn:H-metric-Z}.
\end{eg}

We now summarize some metric properties of the space $\left(\mathbb{H} \cup (\mathbb{R}\backslash\mathbb{Z}), d_\mathbb{Z}\right)$, which consequently hold for the stability condition space $\left(\Stab_{\mass}(\Db(\mathbb{P}^1))/\mathbb{R}, \bar{d}_{\mass}\right)$:

\begin{prop}
\label{prop:metric-dZ-geodesic-etc}
The metric space $(\bH \cup (\bR \backslash \bZ), d_\bZ)$ satisfies:
\begin{enumerate}[label=(\alph*)]
    \item\label{item:geodesic} The space is geodesic (and therefore a length space). In fact, all $d_\hyp$-geodesics remaining geodesic under $d_\bZ$.
    \item\label{item:not-unique-geodesic} The space is not uniquely geodesic (and therefore not CAT(0)).
\end{enumerate}
\end{prop}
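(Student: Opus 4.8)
The plan is to deduce both parts from the additivity criterion of Lemma~\ref{lemma:geodesiccriterion}, exploiting the factorized form of the metric used in Proposition~\ref{prop:geometric-distance/C}: writing $g(s)=\log\frac{|\tau_1-s|}{|\tau_2-s|}$, one has $d_\bZ(\tau_1,\tau_2)=\max_{n\in\bZ}g(n)-\min_{m\in\bZ}g(m)$, and more generally, with $\Delta_{a,b}(P,Q)=\log\frac{|P-b||Q-a|}{|Q-b||P-a|}$, the distance is $d_\bZ(P,Q)=\sup_{a,b\in\bZ}\Delta_{a,b}(P,Q)$. For \ref{item:geodesic} I fix a $d_\hyp$-geodesic $\gamma$ and three points $\tau_1,\tau,\tau_2$ on it in order; since $d_\bZ$ is a metric the content is the reverse triangle inequality $d_\bZ(\tau_1,\tau)+d_\bZ(\tau,\tau_2)\le d_\bZ(\tau_1,\tau_2)$. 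The key structural fact is that $\Delta_{a,b}$ is additive in the middle point, $\Delta_{a,b}(\tau_1,\tau_2)=\Delta_{a,b}(\tau_1,\tau)+\Delta_{a,b}(\tau,\tau_2)$; hence the reverse inequality follows as soon as a single integer pair $(a,b)$ is simultaneously optimal for $(\tau_1,\tau)$ and for $(\tau,\tau_2)$. Equivalently, I must show that the two functions $h_1(s)=\log\frac{|\tau_1-s|}{|\tau-s|}$ and $h_2(s)=\log\frac{|\tau-s|}{|\tau_2-s|}$ attain their maxima over $\bZ$ at a common integer, and their minima at a common integer.

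This common-extremizer claim is the heart of \ref{item:geodesic} and is where I expect the real work to lie. Parametrizing $\gamma$ (a semicircle of center $c$, radius $R$) by hyperbolic arc length $r$, a direct computation gives $|\gamma(r)-s|^2=(c-s)^2+R^2-2(c-s)R\tanh r$, so with $w=c-s$ and $T=\tanh r$ everything is governed by $P(w;T)=w^2+R^2-2wRT$. From this one reads off that $r\mapsto\log|\gamma(r)-s|$ is strictly monotone (with sign $\operatorname{sign}(s-c)$), which forces $h_1$, $h_2$ and $h_{12}=h_1+h_2$ to share the same shape in $w$: increasing on $(-R,R)$ and decreasing on $|w|>R$, with common real extrema at $w=\pm R$. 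Thus each attains its $\bZ$-maximum at one of the two grid points $w^-\le R\le w^+$ bracketing $R$, and the decisive step is that $h_1$ and $h_2$ make the \emph{same} choice between them. Setting $\eta(T)=\log\frac{P(w^+;T)}{P(w^-;T)}$, a short calculation shows that in $\eta'(T)$ the $T$-dependence cancels, leaving $\operatorname{sign}\eta'(T)=-\operatorname{sign}\big((w^+-w^-)(R^2-w^+w^-)\big)$, a constant; since $h_i(w^+)-h_i(w^-)$ is a difference of values of $\eta$ at the endpoints of the relevant $T$-interval, monotonicity of $\eta$ makes $h_1$ and $h_2$ select the same bracketing integer, and the argument at $w=-R$ gives the common minimizer. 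This yields the additivity, so Lemma~\ref{lemma:geodesiccriterion} upgrades $\gamma$ to a $d_\bZ$-geodesic; vertical geodesics are handled as the $R\to\infty$ limit (or by continuity of $d_\bZ$ from the dense non-vertical case), and since any two points of $\bH\cup(\bR\backslash\bZ)$ are joined by a $d_\hyp$-geodesic, the space is geodesic.

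For \ref{item:not-unique-geodesic} I will exhibit two distinct geodesics between one pair of points. The observation to exploit is that on the semicircle $\gamma_{0,1}$ whose endpoints are the integers $0$ and $1$, the function $\Phi(\tau)=\log\frac{|\tau-1|}{|\tau|}$ restricts to signed hyperbolic arc length and the optimal integer pair is exactly $(0,1)$, so $d_\bZ=d_\hyp$ there. By continuity this optimality persists on a thin tubular neighborhood $U$ of a compact sub-arc: for all $P,Q\in U$ one has $d_\bZ(P,Q)=|\Phi(P)-\Phi(Q)|$. Consequently any simple path in $U$ along which $\Phi$ is monotone has $d_\bZ$-length exactly $|\Phi(\text{endpoints})|$ by telescoping, hence is a $d_\bZ$-geodesic by Lemma~\ref{lemma:geodesiccriterion}. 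Choosing $A,B$ on $\gamma_{0,1}$, the sub-arc of $\gamma_{0,1}$ is one such geodesic, while a path bulging transversally off $\gamma_{0,1}$ into $U$ — still monotone in $\Phi$, which is possible because $U$ is two-dimensional and $\nabla\Phi\ne0$ — is a second, distinct one. This shows $(\bH\cup(\bR\backslash\bZ),d_\bZ)$ is not uniquely geodesic, and hence not CAT(0), since CAT(0) spaces are uniquely geodesic.
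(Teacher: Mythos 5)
Your part~\ref{item:geodesic} is essentially the paper's argument: both proofs reduce to showing that for three ordered points on a $d_\hyp$-geodesic a single integer pair is simultaneously optimal for both sub-segments, so that the cross-ratio telescopes and Lemma~\ref{lemma:geodesiccriterion} applies. The paper gets the common optimizer for free by conjugating the geodesic onto the imaginary axis (where the optimal pair is the element of $\rho(\bZ)$ of smallest modulus together with the one of largest modulus, visibly independent of the points), whereas you verify it by the explicit computation of $\operatorname{sign}\eta'(T)$; I checked that computation and it is correct, so this part is fine, just more laborious.

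Part~\ref{item:not-unique-geodesic}, however, has a genuine gap. The claim ``for all $P,Q\in U$ one has $d_\bZ(P,Q)=|\Phi(P)-\Phi(Q)|$'' on a tubular neighborhood $U$ of an arc of $\gamma_{0,1}$ is false: take $P\neq Q$ in $U$ lying on a common Apollonius level set of $\Phi(\tau)=\log\frac{|\tau-1|}{|\tau|}$; then the right-hand side is $0$ while $d_\bZ(P,Q)>0$ because $d_\bZ$ is a genuine metric (equivalently, $g(s)=\log\frac{|P-s|}{|Q-s|}$ then satisfies $g(0)=g(1)$, so the pair $(0,1)$ cannot be optimal and some other integer must beat it). The optimality of $(0,1)$ is \emph{not} an open condition in the pair $(P,Q)$ near $\gamma_{0,1}$, precisely because the true extremizers of $g$ over $\bR\cup\{\infty\}$ sit exactly at the integers $0,1$ there and move off them under any transverse displacement; ``monotone in $\Phi$'' does not rescue the telescoping, since a path can increase $\Phi$ arbitrarily slowly while moving transversally, and for nearby pairs on such a path $d_\bZ(P,Q)\gg|\Phi(P)-\Phi(Q)|$, making the bulged path strictly longer. (One could try to salvage this by demanding the bulge be $C^1$-small and proving a uniform estimate on secant directions, but that argument is not in your proposal.) The paper avoids the problem by choosing the geodesic $\rho^{-1}(i\bR_{>0})$ with $\rho(z)=10-\tfrac1z$, so that $\rho(\bZ)\subseteq[9,11]\cup\{\infty\}$ stays far from the endpoint $0$ of the image geodesic: the supremum is then robustly pinned at $\rho(1)=9$ and the infimum at $\rho(0)=\infty$ for an honestly open set of pairs, the distance degenerates to $\log\frac{|\rho(x)-9|}{|\rho(y)-9|}$, and the bent path $\rho^{-1}([9i,\epsilon+10i])\cup\rho^{-1}([\epsilon+10i,11i])$ is a second geodesic. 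You need a configuration of this robust type, not one whose endpoints are themselves integers.
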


\begin{proof}[Proof of \ref{item:geodesic}]
Let $\tau_1,\tau_2\in\bH\cup(\bR\backslash\bZ)$. Choose a hyperbolic isometry $\rho\in\PSL(2,\bR)$ mapping these points to the imaginary axis with $\text{Im}(\rho(\tau_1)) > \text{Im}(\rho(\tau_2))$.
Since cross-ratios are preserved under such transformations, we have
$$
d_\bZ(\tau_1,\tau_2)=\sup_{s_1,s_2\in\bZ}\left(\log\frac{|\tau_1-s_2||\tau_2-s_1|}{|\tau_2-s_2||\tau_1-s_1|}\right)=\sup_{s_1,s_2\in\bZ}\left(\log\frac{|\rho(\tau_1)-\rho(s_2)||\rho(\tau_2)-\rho(s_1)|}{|\rho(\tau_2)-\rho(s_2)||\rho(\tau_1)-\rho(s_1)|}\right)
$$

\begin{center}
\begin{tikzpicture}[thick]
    \draw
      (-4,0) edge[-latex] node[at end,right]{} (4,0)
      (0,0) edge[-latex] node[at end,right]{} (0,4)
      (0,1)  node[scale=3](i){.} node[left]{$\rho(\tau_2)$}
      (0,2.5)  node[scale=3](i){.} node[left]{$\rho(\tau_1)$}
      (-2,0)  node[scale=1](i){x} node[below]{}
      (0.8,0)  node[scale=1](i){x} node[below]{$\rho(s)$}
    ;
\end{tikzpicture}
\end{center}

\noindent Since $\rho(\bZ)\subseteq\bR\cup\{\infty\}$, we have 
$$
d_\bZ(\tau_1,\tau_2)=\log\frac{|\rho(\tau_1)-m||\rho(\tau_2)-M|}{|\rho(\tau_2)-m||\rho(\tau_1)-M|},
$$
where
$$
m\coloneqq\inf_{s\in\bZ}|\rho(s)| \quad \text{ and } \quad M\coloneqq\sup_{s\in\bZ}|\rho(s)|.
$$

To show that hyperbolic geodesics remain geodesics under $d_\bZ$, by Lemma~\ref{lemma:geodesiccriterion}, it suffices to verify the additivity condition
$$
d_\bZ(\tau_1,\tau_2)=d_\bZ(\tau_1,\tau_3)+d_\bZ(\tau_3,\tau_2)
$$
for any $\tau_3$ lying on the hyperbolic geodesic between $\tau_1$ and $\tau_2$.
Under the isometry $\rho$, the intermediate point $\tau_3$ maps to a point on the imaginary axis between $\rho(\tau_1)$ and $\rho(\tau_2)$.
Observe that
$$
d_\bZ(\tau_1,\tau_3)=\log\frac{|\rho(\tau_1)-m||\rho(\tau_3)-M|}{|\rho(\tau_3)-m||\rho(\tau_1)-M|}
\quad \text{ and } \quad
d_\bZ(\tau_3,\tau_2)=\log\frac{|\rho(\tau_3)-m||\rho(\tau_2)-M|}{|\rho(\tau_2)-m||\rho(\tau_3)-M|}.
$$
Thus, we obtained the desired equality $d_\bZ(\tau_1,\tau_2)=d_\bZ(\tau_1,\tau_3)+d_\bZ(\tau_3,\tau_2)$.
\end{proof}

\begin{proof}[Proof of \ref{item:not-unique-geodesic}]
Consider the hyperbolic isometry $\rho(z)\coloneqq10-\frac{1}{z}$. Then
$$
\{9,\infty\} \subseteq \rho(\bZ) \subseteq [9,11] \cup \{\infty\}.
$$
\begin{center}
\begin{tikzpicture}[thick,scale=0.6]
    \draw
      (-8,0) edge[-latex] node[at end,right]{} (8,0)
      (0,0) edge[-latex] node[at end,right]{} (0,8)
      (0,5)  node[scale=3](i){.} node[left]{$9i$}
      (0,7)  node[scale=3](i){.} node[left]{$11i$}
      (5,0)  node[scale=1](i){x} node[below]{$9$}
      (7,0)  node[scale=1](i){x} node[below]{$11$}
      (6.5,0)  node[scale=1](i){x} node[below]{}
      (5.5,0)  node[scale=1](i){x} node[below]{}
      (5.75,0)  node[scale=1](i){x} node[below]{}
      (6.25,0)  node[scale=1](i){x} node[below]{}
      (6.125,0)  node[scale=1](i){x} node[below]{}
      (5.875,0)  node[scale=1](i){x} node[below]{}
      (6.0625,0)  node[scale=1](i){x} node[below]{}
      (5.9375,0)  node[scale=1](i){x} node[below]{}
      (6,0)  node[scale=1](i){x} node[below]{$\rho(\bZ)$}
      (0.2,6) node[scale=3](i){} node[right]{$\epsilon+10i$}
    ;
    \draw (0,5) -- (0.2,6);
    \draw (0,7) -- (0.2,6);
\end{tikzpicture}
\end{center}

Observe that for sufficiently small $\epsilon>0$, any two points $x,y$ on the segments
$$
[9i,\epsilon+10i]\cup[\epsilon+10i,11i]
$$
with $\text{Im}(x)>\text{Im(y)}$ satisfy the following:
\begin{itemize}
    \item The supremum
$$
\sup_{t\in[9,11]\cup\{\infty\}}\frac{|x-t|}{|y-t|}
$$
is attained at $t=9$.
    \item The infimum
$$
\inf_{t\in[9,11]\cup\{\infty\}}\frac{|x-t|}{|y-t|}
$$
is attained at $t=\infty$.
\end{itemize}
By the same reasoning as in the proof of \ref{item:geodesic}, the union
$$
\rho^{-1}([9i,\epsilon+10i]) \cup \rho^{-1}([\epsilon+10i,11i])
$$
forms a geodesic connecting $\rho^{-1}(9i)$ and $\rho^{-1}(11i)$ in $(\bH\cup(\bR\backslash\bZ),d_\bZ)$ for sufficiently small $\epsilon>0$.
This shows that the metric space is not uniquely geodesic.
\end{proof}

\subsection{Length metric.}
The second metric we consider is the general \emph{length metric} construction:

\begin{defn}
\label{defn:length-metric}
Let $(X,d)$ be a metric space. The \emph{length metric} associated with $d$ is the function $d_\ell\colon X\times X\rightarrow[0,\infty]$ defined by
$$
d_\ell(x,y)\coloneqq \inf  L(\gamma)
$$
where the infimum is taken over all curves $\gamma\colon[0,1]\rightarrow X$ with $\gamma(0)=x$ and $\gamma(1)=y$.
\end{defn}

Note that $(X,d_\ell)$ is automatically a length space.
Moreover, since $d\leq d_\ell$, the topology induced by $d_\ell$ is always at least as fine as that induced by $d$. However, this topology can be strictly finer in general.
In this context, we establish the following result:

\begin{prop}
Assume that $(\overline{\Stab^\Geo(\Db(\bP^1))}/\bC,\bar{d})$ is a length space. Then the topology of $(\Stab(\Db(\bP^1))/\bC,\bar{d})$ coincides with the topology induced by its associated length metric $\bar{d}_\ell$. Moreover, for all $\bar{\sigma}_1,\bar{\sigma}_2\in\Stab(\Db(\bP^1))/\bC$, the following inequalities hold:
$$
\bar{d}(\bar{\sigma}_1,\bar{\sigma}_2)\leq \bar{d}_\ell(\bar{\sigma}_1,\bar{\sigma}_2)\leq 2\bar{d}(\bar{\sigma}_1,\bar{\sigma}_2).
$$
\end{prop}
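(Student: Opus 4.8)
The plan is to prove the two-sided estimate $\bar{d} \leq \bar{d}_\ell \leq 2\bar{d}$, from which the coincidence of topologies is immediate: a bilipschitz equivalence of metrics makes the identity map a homeomorphism. The lower bound $\bar{d} \leq \bar{d}_\ell$ requires no hypothesis, since for any curve $\gamma$ from $\bar{\sigma}_1$ to $\bar{\sigma}_2$ every approximating sum $\sum_i \bar{d}(\gamma(t_{i-1}),\gamma(t_i))$ is at least $\bar{d}(\bar{\sigma}_1,\bar{\sigma}_2)$ by the triangle inequality, so $L(\gamma) \geq \bar{d}(\bar{\sigma}_1,\bar{\sigma}_2)$. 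Thus the entire content lies in the upper bound $\bar{d}_\ell \leq 2\bar{d}$.

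For the upper bound I would exhibit an explicit competitor curve obtained by concatenating three segments through the boundary projections $\bar{p}(\bar{\sigma}_1), \bar{p}(\bar{\sigma}_2)$. The first segment runs from $\bar{\sigma}_1$ to $\bar{p}(\bar{\sigma}_1)$ inside the algebraic chamber $X_k/\bC$ containing $\bar{\sigma}_1$, obtained by linearly decreasing the $\beta$-coordinate from $\beta_1$ to $1$ with $\alpha$ held fixed; by Proposition~\ref{prop:alg-stab-dist}\ref{item:alg-stab-dist-same-chamber} the in-chamber metric is the rescaled supremum metric $\max\{\tfrac{1}{2}|\Delta\alpha|, \tfrac{1}{2}|\Delta\beta|\}$, so this segment has length exactly $\bar{d}(\bar{\sigma}_1,\bar{p}(\bar{\sigma}_1)) = \tfrac{1}{2}(\beta_1-1)$ (and length $0$ when $\bar{\sigma}_1$ is already geometric). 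The third segment, from $\bar{p}(\bar{\sigma}_2)$ to $\bar{\sigma}_2$, is treated symmetrically. The middle segment joins $\bar{p}(\bar{\sigma}_1)$ to $\bar{p}(\bar{\sigma}_2)$ within the closure $\overline{\Stab^\Geo(\Db(\bP^1))}/\bC$; the length-space hypothesis lets me choose it with length at most $\bar{d}(\bar{p}(\bar{\sigma}_1),\bar{p}(\bar{\sigma}_2)) + \delta$ for any $\delta > 0$. Since all three segments lie in the full space, letting $\delta \to 0$ gives
$$
\bar{d}_\ell(\bar{\sigma}_1,\bar{\sigma}_2) \leq \bar{d}(\bar{\sigma}_1,\bar{p}(\bar{\sigma}_1)) + \bar{d}(\bar{p}(\bar{\sigma}_1),\bar{p}(\bar{\sigma}_2)) + \bar{d}(\bar{p}(\bar{\sigma}_2),\bar{\sigma}_2).
$$

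It then remains to establish the numerical inequality
$$
\bar{d}(\bar{\sigma}_1,\bar{p}(\bar{\sigma}_1)) + \bar{d}(\bar{p}(\bar{\sigma}_1),\bar{p}(\bar{\sigma}_2)) + \bar{d}(\bar{p}(\bar{\sigma}_2),\bar{\sigma}_2) \leq 2\bar{d}(\bar{\sigma}_1,\bar{\sigma}_2),
$$
which I would verify by cases according to whether $\bar{\sigma}_1, \bar{\sigma}_2$ are geometric or algebraic. The decisive simplification is that $\bar{d}_\mass$ is invariant under $\bar{p}$ (Proposition~\ref{prop:mass-projection}), so the mass components of $\bar{d}(\bar{\sigma}_1,\bar{\sigma}_2)$ and $\bar{d}(\bar{p}(\bar{\sigma}_1),\bar{p}(\bar{\sigma}_2))$ agree; writing $M$ for this common mass distance, the entire comparison reduces to the phase components, which are controlled by the $\beta$-coordinates through Propositions~\ref{prop:geometric-algebraic-distance/C} and \ref{prop:alg-stab-dist}. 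For instance, when the two projections lie over the same chamber the projected phase distance is $0$, whereas when they lie over distinct chambers it equals $\tfrac{1}{2}$; in each configuration the inequality collapses to an elementary comparison among $M$ and the quantities $\tfrac{1}{2}(\beta_i-1)$.

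I expect the main obstacle to be the bookkeeping in the case where $\bar{\sigma}_1 \in X_k$ and $\bar{\sigma}_2 \in X_\ell$ are algebraic with $k \neq \ell$: here the direct phase distance $\tfrac{1}{2}(\beta_1+\beta_2-1)$ must be weighed against the competitor's phase cost $\tfrac{1}{2}(\beta_1-1) + \tfrac{1}{2} + \tfrac{1}{2}(\beta_2-1)$, and one must check that the factor of $2$ absorbs the surplus in every sub-regime, distinguishing $M \geq \tfrac{1}{2}$ from $M < \tfrac{1}{2}$ and mass-dominated from phase-dominated. Once the numerical inequality holds in all cases, combining it with the automatic lower bound yields $\bar{d} \leq \bar{d}_\ell \leq 2\bar{d}$, and the equality of topologies follows at once.
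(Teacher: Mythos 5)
Your lower bound and your competitor path for the mixed case and for the case of two algebraic points in \emph{distinct} chambers are exactly the paper's construction (the paper's cases (a) and (c)), and the numerical verification you sketch does go through there. The gap is in the remaining case, where $\bar{\sigma}_1=\bar{\sigma}_{\alpha_1,\beta_1}$ and $\bar{\sigma}_2=\bar{\sigma}_{\alpha_2,\beta_2}$ are both algebraic in the \emph{same} chamber $X_k/\bC$. Your plan routes every pair through the boundary projections and then asks for the numerical inequality
$$
\bar{d}(\bar{\sigma}_1,\bar{p}(\bar{\sigma}_1)) + \bar{d}(\bar{p}(\bar{\sigma}_1),\bar{p}(\bar{\sigma}_2)) + \bar{d}(\bar{p}(\bar{\sigma}_2),\bar{\sigma}_2) \leq 2\bar{d}(\bar{\sigma}_1,\bar{\sigma}_2),
$$
but this is false in that case: by Proposition~\ref{prop:alg-stab-dist}\ref{item:alg-stab-dist-same-chamber} the left side equals $\tfrac{1}{2}(\beta_1-1)+\tfrac{1}{2}|\alpha_1-\alpha_2|+\tfrac{1}{2}(\beta_2-1)$ while the right side is $\max\{|\alpha_1-\alpha_2|,|\beta_1-\beta_2|\}$. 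Taking, say, $\beta_1=\beta_2=10$ and $|\alpha_1-\alpha_2|=0.1$ gives $9.05$ on the left against $0.1$ on the right. No constant multiple of $\bar{d}$ controls the detour through the boundary here, because two points deep in the same algebraic chamber can be arbitrarily close to each other while being arbitrarily far from $\beta=1$. Your remark that ``when the two projections lie over the same chamber the projected phase distance is $0$'' does not rescue this, since the dominant terms are the two descents $\tfrac{1}{2}(\beta_i-1)$, not the middle segment.

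The fix is not to detour at all in this case: the chamber $(X_k\backslash\Stab^\Geo(\Db(\bP^1)))/\bC$ with the metric $\tfrac{1}{2}\max\{|\Delta\alpha|,|\Delta\beta|\}$ of Proposition~\ref{prop:alg-stab-dist}\ref{item:alg-stab-dist-same-chamber} is itself a geodesic space (this is the content of \cite{Kikuta}*{Section~3.2}, and is elementary for an $\ell^\infty$-type metric on a convex region), so the straight in-chamber path already realizes $\bar{d}(\bar{\sigma}_1,\bar{\sigma}_2)$ and one gets $\bar{d}_\ell=\bar{d}$ there with constant $1$. This is exactly how the paper handles its case (b). With that case carved out and treated separately, the rest of your argument matches the paper's proof.
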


\begin{proof}
By assumption, the equality $\bar{d}(\bar{\sigma}_1,\bar{\sigma}_2)=\bar{d}_\ell(\bar{\sigma}_1,\bar{\sigma}_2)$ holds for all $\bar{\sigma}_1,\bar{\sigma}_2\in\Stab^\Geo(\Db(\bP^1))/\bC$.
The remaining cases (up to swapping $\bar{\sigma}_1$ and $\bar{\sigma}_2$) are:
\begin{enumerate}[label=(\alph*)]
    \item $\bar{\sigma}_1\in\Stab^\Geo(\Db(\bP^1))/\bC$ and $\bar{\sigma}_2\in (X_k\backslash\Stab^\Geo(\Db(\bP^1)))/\bC$ for some $k\in\bZ$,
    \item $\bar{\sigma}_1, \bar{\sigma}_2\in (X_k\backslash\Stab^\Geo(\Db(\bP^1)))/\bC$ for some $k\in\bZ$,
    \item $\bar{\sigma}_1\in (X_k\backslash\Stab^\Geo(\Db(\bP^1)))/\bC$ and $\bar{\sigma}_2\in (X_\ell\backslash\Stab^\Geo(\Db(\bP^1)))/\bC$ for some $k\neq\ell$.
\end{enumerate}

\noindent\textbf{Case (a):} Following the notation in Section~\ref{subsec:geometry-algebraic-distance}, let $\bar{\sigma}_1=\bar{\sigma}_{\alpha_1,\beta_1}$ and $\bar{\sigma}_2=\bar{\sigma}_{\alpha_2,\beta_2}$, where $\alpha_1,\alpha_2\in\bR$ and $0<\beta_1<1\leq\beta_2$.
By Proposition~\ref{prop:geometric-algebraic-distance/C}, we have:
$$
\bar{d}(\bar{\sigma}_{\alpha_1,\beta_1},\bar{\sigma}_{\alpha_2,\beta_2})=\frac{1}{2}\max\left\{d_\bZ\left(k+\frac{1}{1-e^{\alpha_1+i\pi\beta_1}},k+\frac{1}{1+e^{\alpha_2}}\right),\beta_2-\beta_1\right\}.
$$
By assumption, for any $\epsilon>0$, there exists a path $\gamma_1$ in $\overline{\Stab^\Geo(\Db(\bP^1))}/\bC$ connecting $\bar{\sigma}_{\alpha_1,\beta_1}$ to $\bar{\sigma}_{\alpha_2,1}$ satisfying:
$$
\bar{d}(\bar{\sigma}_{\alpha_1,\beta_1},\bar{\sigma}_{\alpha_2,1})\leq L(\gamma_1)<\bar{d}(\bar{\sigma}_{\alpha_1,\beta_1},\bar{\sigma}_{\alpha_2,1})+\epsilon.
$$
Let $\gamma_2$ be the straight path from $\bar{\sigma}_{\alpha_2,1}$ to $\bar{\sigma}_{\alpha_2,\beta_2}$ without varying $\alpha_2$. Combining these paths gives:
\begin{align*}
\bar{d}_\ell(\bar{\sigma}_{\alpha_1,\beta_1},\bar{\sigma}_{\alpha_2,\beta_2}) & \leq L(\gamma_1) + L(\gamma_2) \\
& \leq \bar{d}(\bar{\sigma}_{\alpha_1,\beta_1},\bar{\sigma}_{\alpha_2,1})+\epsilon + \frac{1}{2}(\beta_2-1) \\
& = \frac{1}{2}\max\left\{d_\bZ\left(k+\frac{1}{1-e^{\alpha_1+i\pi\beta_1}},k+\frac{1}{1+e^{\alpha_2}}\right),1-\beta_1\right\} + \frac{1}{2}(\beta_2-1) + \epsilon \\
& \leq 2\bar{d}(\bar{\sigma}_{\alpha_1,\beta_1},\bar{\sigma}_{\alpha_2,\beta_2})+\epsilon.
\end{align*}

\noindent\textbf{Case (b):} 
Since $(X_k\backslash\Stab^\Geo(\Db(\bP^1)))/\bC$ forms a geodesic space by \cite{Kikuta}*{Section~3.2}, the equality 
$\bar{d}(\bar{\sigma}_1,\bar{\sigma}_2)=\bar{d}_\ell(\bar{\sigma}_1,\bar{\sigma}_2)$ 
holds for all 
$\bar{\sigma}_1, \bar{\sigma}_2\in (X_k\backslash\Stab^\Geo(\Db(\bP^1)))/\bC$. \\

\noindent\textbf{Case (c):}
Let $\bar{\sigma}_1=\bar{\sigma}_{\alpha_1,\beta_1}\in X_k$ and $\bar{\sigma}_2=\bar{\sigma}_{\alpha_2,\beta_2}\in X_\ell$. By Proposition~\ref{prop:alg-stab-dist}, we have:
$$
\bar{d}(\bar{\sigma}_{\alpha_1,\beta_1},\bar{\sigma}_{\alpha_2,\beta_2})=\frac{1}{2}\max\left\{d_\bZ\left(k+\frac{1}{1+e^{\alpha_1}},\ell+\frac{1}{1+e^{\alpha_2}}\right),\beta_1+\beta_2-1
\right\}
$$
By assumption, for any $\epsilon>0$, there exists a path $\gamma_1$ in $\overline{\Stab^\Geo(\Db(\bP^1))}/\bC$ connecting $\bar{\sigma}_{\alpha_1,1}$ to $\bar{\sigma}_{\alpha_2,1}$ satisfying: 
$$
\bar{d}(\bar{\sigma}_{\alpha_1,1},\bar{\sigma}_{\alpha_2,1})\leq L(\gamma_1)<\bar{d}(\bar{\sigma}_{\alpha_1,1},\bar{\sigma}_{\alpha_2,1})+\epsilon.
$$
Let $\gamma_2$ be the straight path from $\bar{\sigma}_{\alpha_1,1}$ to $\bar{\sigma}_{\alpha_1,\beta_1}$, and  $\gamma_3$ be the straight path from $\bar{\sigma}_{\alpha_2,1}$ to $\bar{\sigma}_{\alpha_2,\beta_2}$. Combining these paths gives:
\begin{align*}
\bar{d}_\ell(\bar{\sigma}_{\alpha_1,\beta_1},\bar{\sigma}_{\alpha_2,\beta_2}) & \leq L(\gamma_1) + L(\gamma_2) + L(\gamma_3) \\
& \leq \bar{d}(\bar{\sigma}_{\alpha_1,1},\bar{\sigma}_{\alpha_2,1})+\epsilon  +\frac{1}{2}(\beta_1-1)+\frac{1}{2}(\beta_2-1) \\
& = \frac{1}{2}\max\left\{d_\bZ\left(k+\frac{1}{1+e^{\alpha_1}},\ell+\frac{1}{1+e^{\alpha_2}}\right),1\right\} +\frac{1}{2}(\beta_1-1) + \frac{1}{2}(\beta_2-1) + \epsilon \\
& \leq 2\bar{d}(\bar{\sigma}_{\alpha_1,\beta_1},\bar{\sigma}_{\alpha_2,\beta_2})+\epsilon.
\end{align*}
\end{proof}

\bigskip
\bibliography{ref}
\bibliographystyle{alpha}

\ \\

\noindent Yu-Wei Fan \\
\textsc{Yau Mathematical Sciences Center, Tsinghua University\\
Beijing 100084, China}\\
\texttt{ywfan@mail.tsinghua.edu.cn}

\end{document}